\documentclass[a4paper,11pt]{amsart} 

\usepackage{amssymb, amsmath, amsthm}
\usepackage[utf8]{inputenc}
\usepackage{graphicx}
\usepackage[final]{hyperref}
\usepackage{color}
\usepackage[normalem]{ulem}
\usepackage{epstopdf}

\usepackage[a4paper, centering]{geometry}
\geometry{text={15cm, 22cm}}
\usepackage{color}
\usepackage{graphicx}
\usepackage{scalerel,stackengine}
\usepackage{mathtools}
\usepackage{esint}
\usepackage{mathrsfs}  

\usepackage{mathtools}
\usepackage{enumitem}%

\newcommand \Ln{x}
\newcommand \Per{\text{Per}}

\newcommand \Ls{\mathcal{H}^{n-1}}

\newcommand \Rn{\mathbb{R}^n}

\newcommand \Ece{\mathcal{E}^{c,\epsilon}}
\newcommand \Ecek{\mathcal{E}_{k}^{c,\epsilon}}
\newcommand \Ec{\mathcal{E}^{c}}

\newcommand \Um{\mathcal{U}_{m}}
\newcommand \goto{\underset{\epsilon\rightarrow 0}{\longrightarrow}}

\newcommand \vps{{\varepsilon}}
\newcommand{\Om}{\Omega}

\newcommand{\lb}{\lambda}

\newcommand{\sm}{\setminus}

\newcommand{\sq}{\subseteq}
\newcommand{\ov}{\overline}

\def\RR{\mathbb{R}}
\def\ds{\displaystyle}


\let\oldsqrt\sqrt
\def\sqrt{\mathpalette\DHLhksqrt}
\def\DHLhksqrt#1#2{%
\setbox0=\hbox{$#1\oldsqrt{#2\,}$}\dimen0=\ht0
\advance\dimen0-0.2\ht0
\setbox2=\hbox{\vrule height\ht0 depth -\dimen0}%
{\box0\lower0.4pt\box2}}

\newtheorem{theorem}{Theorem}
\newtheorem{proposition}[theorem]{Proposition}
\newtheorem{lemma}[theorem]{Lemma}

\theoremstyle{definition}
\newtheorem{definition}[theorem]{Definition}
\newtheorem{remark}[theorem]{Remark}

\numberwithin{equation}{section}

\usepackage[colored]{shadethm}

\begin{document}

\title[Spectral inequalities in quantitative form]{Degenerate free discontinuity problems and spectral inequalities in quantitative form}
\author[D. Bucur]
{Dorin Bucur}
\address[Dorin Bucur]{Univ. Savoie Mont Blanc, CNRS, LAMA \\
73000 Chamb\'ery, France
}
\email[D. Bucur]{  dorin.bucur@univ-savoie.fr}
\author[A. Giacomini]
{Alessandro Giacomini}
\address[Alessandro Giacomini]{DICATAM, Sezione di Matematica, Universit\`a degli Studi di Brescia, Via Branze 43, 25133 Brescia, Italy}
\email[A. Giacomini]{alessandro.giacomini@unibs.it}

\author[M. Nahon]
{Mickaël Nahon}
\address[Mickaël Nahon]{Univ. Savoie Mont Blanc, CNRS, LAMA \\
73000 Chamb\'ery, France
}
\email[M. Nahon]{  mickael.nahon@univ-smb.fr}

\thanks{A.G. is member of the Gruppo Nazionale per L'Analisi Matematica, la Probabilit\`a e loro Applicazioni (GNAMPA) of the Istituto Nazionale di Alta Matematica (INdAM)}

\keywords{Robin problems, Saint Venant, Faber Krahn, quantitative}
\subjclass[2010]{ 35R35, 49Q10, 49N60. }



\begin{abstract} 
We introduce a new geometric-analytic functional that we analyse in the context of free discontinuity problems. Its main feature is that the geometric term (the length of the jump set) appears with negative sign. This is motivated by 
searching quantitative inequalities for   best constants of Sobolev-Poincar\'e inequalities with trace terms in $\RR^n$ which correspond to fundamental eigenvalues associated to semilinear problems for the Laplace operator with Robin boundary conditions. Our method is based on the study of this new, degenerate,  functional which involves an obstacle problem in interaction with the jump set. Ultimately, this becomes a mixed free discontinuity/free boundary problem occuring above/at the level of the obstacle, respectively.    \end{abstract}

\date{\today}
\maketitle
\tableofcontents
\section{Introduction}
Free discontinuity problems emerged in the context of the analysis of the Mumford-Shah functional, later on around different crack propagation models of Francfort and Marigo type and more recently around  shape optimization problems of Robin type. The common feature of all those problems is, roughly speaking, the minimization of a sum between an energy term corresponding to a certain state equation issued from the model and of some more geometric terms involving the volume of the domain of the PDE, the length of the jump set or some more complex jump energy. A formal example could be written as
$$\min \{E(u) + {\mathcal H}^{n-1}(J_u) : u \in SBV_{loc} (\RR^n)\}.$$
 The balance between the energy $E(\cdot)$ of the PDE and the geometric term (above the length of the jump set) is the key phenomenon leading to a solution of the free discontinuity problem. 

The main focus of this paper is to introduce and analyse a new  analytic-geometric functional involving both an energy of a PDE and the length of the jump set, in which the geometric term appears with negative sign. The exact description is given in the next section but, formally, this could be written as
$$\min \{E(u) - {\mathcal H}^{n-1}(J_u) : u \in SBV_{loc} (\RR^n)\}.$$
Of course, at a first sight this may appear surprising! Presumably, the negative sign would lead to non-existence of a solution and ill posedness. However, this is not always the case, as the presence of the jump energy with a negative sign can sometimes be balanced by the energy of the PDE. As we will show in the next section, this is the case if the jump set acts as an obstacle and the energy contains some mass of the state function on the jump set. Robin boundary conditions can be suitably adapted to play this role.  Ultimately, this leads to  a new (degenerate) problem which takes the form of a free discontinuity problem above the obstacle and of a free boundary problem at the level of  the obstacle. 

This kind of problems   pops   up naturally in the context of searching quantitative forms of spectral isoperimetric inequalities for  eigenvalues of nonlinear Robin Laplacian problems, in which the ball is expected to be a solution. Proving that the minimizer of the associated analytic-geometric functional is the ball, gives straight away a spectral isoperimetric inequality in a quantitative form.

In order to introduce the functional, we recall our objectives.

\medskip

 \noindent{\bf The context of quantitative isoperimetric inequalities.} The sharp quantitative isoperimetric inequality proved by Fusco, Maggi and Pratelli in 2008 (see \cite{FMP08}) reads

\begin{equation}\label{bgn01}
|\Omega|^{\frac{1-n}{n}}\Per(\Omega) - |B|^{\frac{1-n}{n}}\Per(B)\ge C(n)\,\mathcal A(\Omega)^2,
\end{equation}
where $\Om \sq \RR^n$ is a measurable set, $B$ is a ball of the same volume as $\Om$, $\Per (\Om)$ is the generalized perimeter of $\Om$ and
$$\mathcal A(\Omega)= \inf\left\{\frac{|\Omega \triangle  B |}{|\Omega|} :  B\subset \RR^n, | B|=|\Om|\right\},$$
is the  Fraenkel asymmetry. 

In the vein of this inequality, in the last decade  intensive research was carried to obtain quantitative versions of some classical spectral inequalities, like Faber-Krahn, Szeg\"o-Weinberger, Saint-Venant, Weinstock and many others. We refer the reader to the recent survey by Brasco and De Philippis \cite{BDP17} for an overview of the topic.

In \cite{BP12}, Brasco and Pratelli prove a sharp quantitative form for the Szeg\"o-Weinberger inequality 
$$|B|^{2/N}\mu_1(B)-|\Omega|^{2/n}\mu_1(\Omega)\ge C(n)\,\mathcal A(\Omega)^2,
$$
and in  \cite{BDPR12} Brasco, De Philippis and Ruffini found a similar quantitative form of the Brock-Weinstock inequality. The common feature of both results is that the ball corresponds to a {\it maximal} value. Loosely speaking, the strategy to prove such an inequality relies on studying some weighted form of \eqref{bgn01}, via a suitable choice of test functions. 

Spectral inequalities where the ball is {\it minimal}, like the Faber-Krahn inequality for the Dirichlet Laplacian, requires  a completely different approach, since the use of fixed test functions is not anymore useful. The first results on the quantitative form of the Faber-Krahn inequalities were obtained by  Melas \cite{Me92} and Hansen and Nadirshvili \cite{HN94} for simply connected sets in dimension $2$ and convex sets in $\RR^n$, but the complete proof of the sharp form of the quantitative inequality was given only in 2015 by Brasco, De Philippis and Velichkov \cite{bra15}. A fundamental idea in their proof is to use a selection principle, in the spirit of Cicalese and Leonardi \cite{CL12}, which, roughly speaking, reduces the class of sets $\Om$ for which the inequality has to be proved to a much smaller one, consisting on smooth, small graph perturbations of the ball which can be handled by local perturbation arguments. The selection of those sets is done by solving a suitable {\it auxiliary free boundary problem}; this part concentrates the most of the technicalities. Following the same strategy, nonlinear eigenvalues   were  discussed by Fusco and Zhang in \cite{FZ17}.

The purpose of this paper is to get quantitative isoperimetric inequalities for the best constants of  Sobolev-Poincar\'e inequalities with trace terms.
 Those constants are fundamental semilinear eigenvalues of the Laplace operator with Robin boundary conditions and can be expressed by minimization of suitable Rayleigh quotients. Our objective could be compared to the quantitative inequalities of Faber-Krahn type obtained for Dirichlet boundary conditions in \cite{bra15} and \cite{FZ17}, but from a technical point of view the solution is completely different. 

\medskip
\noindent{\bf Quantitative spectral inequalities for the Robin Laplacian.}
Let $\beta >0$. For every bounded, open Lipschitz set $\Om\sq \RR^n$ and for every $q \in [1, \frac{2n}{n-1})$ one defines
\begin{equation}\label{bgn07}
\lb_q(\Om) =\inf_{u\in H^1(\Om),  u\not=0} \frac{\ds \int_{\Omega}|\nabla u|^2 d\Ln+\int_{\partial\Omega}\beta u^2d\Ls}{\left(\ds \int_{\Omega}|u|^q d\Ln\right)^{2/q}}.
\end{equation}
Our objective is to prove that 
\begin{equation}\label{bgn06}
\lb_q(\Om)-\lb_q(B)\ge C\,\mathcal A(\Omega)^2
\end{equation}
where the constant $C>0$ depends on $n, \beta, q$ and $|\Om|$, but not on $\Om$.  

\smallskip
\noindent {\it The non-quantitative version of \eqref{bgn06}: the case $C=0$.}  Before proving \eqref{bgn06} in its quantitative form, with $C>0$, it is convenient to recall that the inequality is true with $C=0$ for every $q\in [1,2]$. The minimality of the ball among all Lipschitz sets of the same volume for the first Robin eigenvalue of the Laplacian (i.e. $q=2$)
\begin{equation}\label{bgn02}
\lb_2(\Om)-\lb_2(B)\ge 0
\end{equation}
 was proved in  two steps,  by Bossel in $\RR^2$ in 1986 (see \cite{Bo86})  and  by Daners in $\RR^n$ in 2006 (see \cite{Da06}).  The proofs are quite involved  and definitely require new ideas with respect to the Faber-Krahn inequality, namely the  analysis of the so called {\it $H$-function}. As we do not use this function here and because it is quite technical, we shall not detail it here (the reader is referred to  \cite{Da06}). Nevertheless, it is important to say that intensive efforts were done to build similar \mbox{$H$-functions} for other values of $q\not=2$, in particular for the special case $q=1$ corresponding to the torsional rigidity, with the objective to extend the Saint-Venant inequality. Up to now,  they were not successful  and it is likely that such an $H$-function may not exist, so that a proof similar to Bossel-Daners in the case $q\not=2$ cannot be produced. However, the inequality $\lb_q(\Om)- \lb_q(B)\ge 0$ has been proved, using a different strategy, for any $q\in [1,2]$ in \cite{Buc15} (see also \cite{Buc10}), while for  $q\in (2,\frac{2n}{n-1})$ it has been proved in a slightly weaker form. The proof is based on a free discontinuity approach in which the inequality is seen as a minimization problem in the class of special functions of bounded variation in $\RR^n$ (see Section \ref{bgn04} below). 

\smallskip
\noindent {\it The quantitative version of \eqref{bgn06}: the case $C>0$.} Coming back to the quantitative form \eqref{bgn06},  in \cite{BFNT18} the result was proved for $q=2$, {\it only}. The reason was of technical nature. Precisely, the proof makes crucial use of the $H$-function, available only for $q=2$.
Indeed, there are two steps in the proof  (\cite{BFNT18}): the first step is based on a deeper analysis of the H-function of Bossel and Daners which led to the  {\it  intermediate} inequality 
\begin{equation}\label{bgn05}
\lb_2(\Om) - \lb_2(B)\ge \frac{\beta}{2} \inf_{x\in \Om} u^2(x) (\Per(\Om)-\Per(B)).
\end{equation}
By itself, this inequality is interesting and already quantitative, but not uniform, as the difference of the perimeters on the right hand side is multiplied by the infimum of an $L^2$-normalized eigenfunction $u$, which depends on $\Om$.  
In a second step, one uses the selection principle to replace $\Om$ by a new set which, roughly speaking, has lower eigenvalue, comparable Fraenkel asymmetry and a controlled, uniform, lower bound of the eigenfunction. The new set is build as a minimizer of a suitable 
 {\it auxiliary free discontinuity problem}. This last step recalls both the strategy of Cicalese and Leonardi for the quantitative isoperimetric inequality and the one of Brasco, De Philippis and Velichkov for the first Dirichlet eigenvalue. The difference is however fundamental, as one has to solve a free discontinuity problem with a completely different objective. Indeed, one aims to compare a general set with another set, with a comparable Fraenkel asymmetry and lower eigenvalue, for which the lower bound of the state function is controlled from below (in order to use the intermediate inequality of Step 1). Meanwhile, in \cite{CL12}
and \cite{bra15} the solutions of the associated free boundary problems had the objective  to compare a general set with a set which is graph over the ball (in order to use  second order differential inequalities).

 \medskip
 \noindent{\bf A new functional to handle  quantitative inequalities.}  
The main purpose of the paper is to obtain the quantitative inequality \eqref{bgn06}. While the selection principle in association with the auxiliary free discontinuity problem can be extended to the case $1\le q<2$, it turns out that the main difficulty is to  prove an {\it intermediate} inequality similar to \eqref{bgn05}. Indeed, the absence of $H$-functions requires a completely new strategy.

The key idea is to introduce a new analytic-geometric functional involving both the energy of an obstacle problem and  geometric terms. Precisely, we add a perimeter term with {\it negative sign}, which may appear surprising for a minimization problem. However, this term is balanced by the obstacle energy. Indeed, the PDE and the geometric terms interact in the minimization process, 
which can be carried  out  in the framework of free discontinuity problems. We prove that the minimizer corresponds to a ball and, quite directly, this fact  provides  the intermediate quantitative inequality.

\section{Introduction of the new functional, main results and strategy of the proofs}
Let $q\in [1,2)$ and $\beta >0$ be given. Let $\Omega$ be a  bounded, open, Lipschitz set and $B$ a ball such that $|B|=|\Omega|$. Instead of working with the functionnal $\lambda_q(\Omega)$, we work with the following: for all $u\in H^1(\Omega), u\ge 0$, we define
\[E(u;\Omega)=\frac{1}{2}\int_{\Omega}|\nabla u|^2 d\Ln+\frac{\beta}{2}\int_{\partial\Omega} u^2d\Ls -\frac{1}{q}\int_{\Omega}u^q d\Ln, \]
and 
$$E(\Omega)= \min\{ E(u;\Omega) : u\in H^1(\Omega), u\ge 0\}.$$
$E(\Om)$ and $\lb_q(\Om)$ are linked by the relation:
\begin{equation}\label{bgn12.1}
E(\Omega)=\frac{q-2}{2q} \lb_q(\Omega)^\frac{q}{q-2},
\end{equation}

Here are the main results of the paper.
\begin{theorem}\label{mainresult}
For every, $\Om$, $\beta$ and $q$ as above, and let $u_\Om$ be a minimizer of $E(\cdot;\Om)$, then the intermediate inequality 
\begin{equation}\label{bgn08}
E(\Omega)-E(B) \ge  \frac{\beta}{2} \left(\inf_{x\in \Om} u_\Om(x)\right)^2 (\Per(\Om)-\Per(B))
\end{equation}
 holds true.
\end{theorem}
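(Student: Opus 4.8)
\textit{Strategy of the proof.} The plan is to derive \eqref{bgn08} from a single auxiliary principle --- that the ball minimises the degenerate, negative-perimeter functional announced in the introduction. Set $c:=\inf_{\Om}u_\Om$, so that $u_\Om\ge c$ a.e.\ in $\Om$; one has $c>0$ by the strong maximum principle together with a Hopf-type boundary estimate applied to the Euler--Lagrange problem $-\Delta u_\Om=u_\Om^{q-1}$, $\partial_\nu u_\Om+\beta u_\Om=0$, so that \eqref{bgn08} is not vacuous, but positivity of $c$ plays no role below. Write $V:=|\Om|=|B|$. Over nonnegative functions $u$ in an $SBV$-type space on $\RR^n$ subject to the obstacle constraint $u\notin(0,c)$ a.e.\ and to the volume bound $|\{u>0\}|\le V$, I would introduce
\[
\mathcal E_c(u)=\frac12\int_{\RR^n}|\nabla u|^2\,d\Ln+\frac\beta2\int_{J_u}\big((u^+)^2+(u^-)^2\big)\,d\Ls-\frac1q\int_{\RR^n}u^q\,d\Ln-\frac\beta2\,c^2\,\Ls(J_u).
\]
The novelty is the last term, a perimeter entering with a negative sign; it is rendered harmless by the obstacle, since on $J_u$ the constraint forces $(u^+)^2+(u^-)^2\ge c^2$, so the combined jump integrand $(u^+)^2+(u^-)^2-c^2$ is nonnegative there.

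\textit{Step 1: the competitor built on $\Om$.} The zero extension $u_\Om\chi_\Om$ of $u_\Om$ is admissible for $\mathcal E_c$: it respects the obstacle constraint because $u_\Om\ge c$ in $\Om$, and $|\{u_\Om\chi_\Om>0\}|=V$. A direct computation --- the first three terms of $\mathcal E_c$ evaluated on $u_\Om\chi_\Om$ reproduce $E(u_\Om;\Om)=E(\Om)$, and the last equals $\tfrac\beta2 c^2\Per(\Om)$ because $J_{u_\Om\chi_\Om}=\partial\Om$ up to an $\Ls$-null set (the trace of $u_\Om$ being $\ge c>0$) --- gives
\[
\mathcal E_c(u_\Om\chi_\Om)=E(\Om)-\frac\beta2\,c^2\,\Per(\Om)\ \ge\ \inf\mathcal E_c.
\]

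\textit{Step 2: the ball is a minimiser (the hard part).} This is where essentially all the work lies. I would prove that $\mathcal E_c$ attains its infimum and that, up to a rigid motion, a minimiser $u^*$ is radial with positivity set a ball of volume $V$, and that $\mathcal E_c(u^*)=E(B)-\tfrac\beta2 c^2\Per(B)$, hence
\[
\inf\mathcal E_c\ \ge\ E(B)-\frac\beta2\,c^2\,\Per(B).
\]
The subtle point is \emph{existence}: since $u\mapsto-\Ls(J_u)$ is only upper semicontinuous along $SBV$ sequences, one must treat $\tfrac\beta2\int_{J_u}\big((u^+)^2+(u^-)^2-c^2\big)\,d\Ls$ as a single jump functional and prove it lower semicontinuous --- and this is precisely what the obstacle buys, because it keeps that integrand nonnegative and controls $(u^+)^2$ from below on $J_u$, so the standard lower-semicontinuity and compactness theory in $SBV$ applies (together with the coercivity from the positive terms and the volume bound, which also make $\mathcal E_c$ bounded below). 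Given a minimiser $u^*$, I would then (i) establish its regularity and exhibit the structure forecast in the introduction --- the contact set $\{u^*=c\}$, at whose boundary a free boundary problem occurs, and the region $\{u^*>c\}$, where $u^*$ solves $-\Delta u^*=(u^*)^{q-1}$ and has no interior jumps; and (ii) deduce, by spherical symmetrisation/rearrangement or a moving-plane argument, that $u^*$ is radial with positivity set a ball; evaluating $\mathcal E_c$ on the resulting radial profile, which solves the corresponding Robin problem on its support with obstacle, then pins the minimum to $E(B)-\tfrac\beta2 c^2\Per(B)$. I expect step (ii) --- the rigidity forcing the minimiser of this degenerate free discontinuity / free boundary problem to be a ball --- to be the main obstacle, with the obstacle-rescued lower semicontinuity of the existence step a close second.

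\textit{Conclusion.} Chaining Steps 1 and 2,
\[
E(\Om)-\frac\beta2\,c^2\,\Per(\Om)\ \ge\ \inf\mathcal E_c\ \ge\ E(B)-\frac\beta2\,c^2\,\Per(B),
\]
and substituting $c=\inf_{x\in\Om}u_\Om(x)$ and rearranging gives precisely \eqref{bgn08}.
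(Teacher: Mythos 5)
Your outline coincides with the paper's strategy: you replace the quantity to be proved by the optimality of the ball for a degenerate functional in which the perimeter enters with a negative sign, observe that the obstacle $c=\inf_\Omega u_\Omega$ keeps the jump integrand nonnegative, play the zero extension $u_\Omega\chi_\Omega$ of $u_\Omega$ against the ball profile, and conclude by rearranging. This is exactly what the paper does, in the shifted coordinate $v=u-c$ (its functional $\mathcal E^c$ and the corresponding $E^c(\Omega)=\min\{E(u;\Omega):u\ge c\}-\tfrac\beta2 c^2\Per(\Omega)+\tfrac{c^q}{q}|\Omega|$), and your Step~1 reproduces the paper's computation of $\mathcal E^c\big((u_\Omega-c)_+\big)$ and of $E^c(B^m)$.

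The gap is in the execution of your Step~2, which you rightly flag as the heart of the matter but present as if the direct route --- $SBV$ compactness, lower semicontinuity of the single jump integrand, then regularity and spherical rearrangement --- would go through. It does not go through as such, because the jump integrand degenerates: at a boundary point of $\{u>0\}$ where the inner trace equals exactly $c$, the quantity $(u^+)^2+(u^-)^2-c^2$ vanishes, and there is therefore no boundary-energy control near the contact with the obstacle. This is precisely the ``critical behavior near the obstacle'' the paper points out, and it blocks the standard nondegeneracy argument: the ODE one obtains for $f(t)=\int_{\{0\le v\le t\}}|\nabla v|\,d\Ln$ has the form $f\le Ctf'$, which gives nothing. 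The paper circumvents this by introducing the regularized family $\mathcal E^{c,\vps}$, where $2c\underline v,\,2c\overline v$ are replaced by $2c\underline v^{1+\vps},\,2c\overline v^{1+\vps}$; this yields instead $f\le C[tf']^{1+\vps/(2n)}$, from which the nondegeneracy lemma follows, and then one proves ball optimality for each $\vps>0$ and passes to the limit $\vps\to 0$ (Proposition~\ref{prop:minballc}). Without some such approximation scheme, your direct version of Step~2 would stall at nondegeneracy, and consequently at the closedness of the jump set and the identification of the support as a ball. The rest of your outline --- existence via concentration-compactness, regularity, spherical rearrangement and the exclusion of annuli --- is in the right spirit and mirrors the structure of the paper's proof.
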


This intermediate inequality may have its own interest although it is not uniform.  Using the relation between $E$ and $\lambda_q$, we obtain the following inequality on $\lambda_q(\Omega)$

\[\frac{2-q}{2q}\left(\left(\frac{\lambda_q(\Omega)}{\lambda_q(B)}\right)^\frac{q}{2-q}-1\right)\lambda_q(\Omega) \ge  \frac{\beta}{2} \left(\inf_{x\in \Om} u(x)\right)^2 (\Per(\Om)-\Per(B)),
\]
where $u$ is a minimizer of the Rayleigh quotient \eqref{bgn07} that is normalized in $L^q$. Notice that when $q\rightarrow 2$, the left-hand side diverges while the right-hand side converges; we do not recover \eqref{bgn05} with this. However, we still obtain the following.

\begin{theorem}\label{mainresult.2}
For every, $\Om$, $\beta$ and $q$ as above:
\[ \lb_q(\Omega)-\lb_q(B)\geq C\mathcal{A}(\Omega)^2,\]
where $C>0$ depends on $n,\beta,q$ and $|\Omega|$, only.

\end{theorem}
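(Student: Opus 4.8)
The plan is to combine the intermediate inequality of Theorem \ref{mainresult} with a selection principle in the spirit of Cicalese--Leonardi and Brasco--De Philippis--Velichkov, adapted to the $SBV$/free-discontinuity setting used in \cite{Buc15}. The first step is a reduction: arguing by contradiction and by the scaling invariance encoded in \eqref{bgn12.1}, it suffices to produce, for each fixed volume, a constant $C>0$ such that $\lb_q(\Om)-\lb_q(B)\ge C\,\mathcal A(\Om)^2$ for all $\Om$ with $\mathcal A(\Om)$ small; sets far from the ball are handled by the already-known strict inequality $\lb_q(\Om)-\lb_q(B)>0$ (from \cite{Buc15}) together with a compactness argument. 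So assume a sequence $\Om_k$ with $\mathcal A(\Om_k)\to 0$ and $\lb_q(\Om_k)-\lb_q(B)=o(\mathcal A(\Om_k)^2)$.

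Next I would run the selection principle: replace each $\Om_k$ by a minimizer $\Om_k^\ast$ of an auxiliary penalized functional of the form
\begin{equation*}
\Om\mapsto \lb_q(\Om)+\Lambda_k\big|\mathcal A(\Om)-\mathcal A(\Om_k)\big|
\end{equation*}
(or the $SBV$ relaxation thereof, following \cite{Buc15}), under the volume constraint $|\Om|=|B|$, with $\Lambda_k\to+\infty$ chosen so that $\mathcal A(\Om_k^\ast)$ stays comparable to $\mathcal A(\Om_k)$ while $\lb_q(\Om_k^\ast)-\lb_q(B)$ remains $o(\mathcal A(\Om_k)^2)$. The point of this construction — and this is where most of the work lies — is regularity: one must show that the minimizers $\Om_k^\ast$ enjoy uniform density and (via the quasi-minimality inherited from the penalization) that they converge, after translation, to $B$ in a strong enough sense that the associated $L^q$-normalized eigenfunctions $u_k$ converge uniformly to the eigenfunction on $B$, which is bounded away from $0$. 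In particular this yields a uniform lower bound $\inf_{\Om_k^\ast}u_k\ge c_0>0$ independent of $k$.

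With this uniform bound in hand, apply Theorem \ref{mainresult} to $\Om_k^\ast$: since $E(\Om_k^\ast)-E(B)$ is controlled by $\lb_q(\Om_k^\ast)-\lb_q(B)=o(\mathcal A(\Om_k)^2)$ through the monotone relation \eqref{bgn12.1}, we get
\begin{equation*}
\Per(\Om_k^\ast)-\Per(B)\le \frac{2}{\beta c_0^2}\big(E(\Om_k^\ast)-E(B)\big)=o\big(\mathcal A(\Om_k)^2\big).
\end{equation*}
But the sharp quantitative isoperimetric inequality \eqref{bgn01} gives $\Per(\Om_k^\ast)-\Per(B)\ge C(n)\,\mathcal A(\Om_k^\ast)^2\ge c\,\mathcal A(\Om_k)^2$, a contradiction for $k$ large. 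This proves \eqref{bgn06} for $\mathcal A$ small, hence in general, with a constant depending only on $n,\beta,q,|\Om|$.

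The main obstacle I anticipate is the regularity and convergence analysis of the auxiliary minimizers $\Om_k^\ast$: establishing the uniform lower bound on the eigenfunctions requires showing that these minimizers are genuine open sets (not merely $SBV$ configurations with a nontrivial jump set), with uniformly bounded geometry near $B$, so that elliptic estimates apply uniformly. This is precisely the point where the degenerate free discontinuity analysis developed for Theorem \ref{mainresult} — the interplay between the obstacle and the jump set — has to be leveraged, together with the quasi-minimality coming from the large penalization parameter, to exclude jump sets and upgrade convergence from $L^1$ of characteristic functions to uniform convergence of the state functions.
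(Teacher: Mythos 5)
Your global strategy --- produce an auxiliary optimal set, apply Theorem \ref{mainresult} to it, and close with the quantitative isoperimetric inequality \eqref{bgn01} --- is indeed what the paper does, but the selection principle you describe is a different one, and it leaves its central technical step unresolved. The paper does not penalize Fraenkel asymmetry with $\Lambda_k\to\infty$ and argue by contradiction along a sequence; it solves, for a \emph{fixed} small $k>0$, the auxiliary problem
\[ \inf\{ E(A)+k|A| : A\subset\Omega\}, \]
where the subset constraint $A\subset\Omega$ is essential. Tightness (hence existence in the $SBV^{1/2}$ relaxation) follows simply because the support is trapped inside $\Omega$; the uniform lower bound $u\ge c>0$ on the state function, with $c$ depending only on $n,\beta,q,|\Omega|$, is obtained \emph{directly} by the nondegeneracy argument of Lemma \ref{lemmaCaff} with $\epsilon=1$, not from uniform convergence of eigenfunctions to the one on $B$; and the proof finishes with a direct chain of inequalities that applies Theorem \ref{mainresult} and \eqref{bgn01} to $A$, uses the monotonicity of $\rho\mapsto E(B_\rho)+2k|B_\rho|$ on $[0,r_m]$, and the elementary estimate $2|\Omega\setminus A|+|A\triangle B^{|A|}|\ge |\Omega\triangle B^{|\Omega|}|$, giving $E(\Omega)-E(B^{|\Omega|})\ge c\,\mathcal A(\Omega)^2$ without any compactness argument, contradiction sequence, or comparison of $\Omega^\text{opt}$ with $B$.

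In your version, the step you candidly call ``the main obstacle'' is in effect the entire proof, and it faces concrete difficulties. First, the Fraenkel asymmetry penalty is nonlocal and nonsmooth; the local quasi-minimality that drives the regularity in Lemma \ref{bounded} does not transfer to it, and the references \cite{CL12}, \cite{bra15} handle such penalties via a second-order comparison with graph perturbations of the ball on top of a perimeter or Dirichlet-energy backbone --- precisely the machinery the introduction explains is unavailable in the Robin case. Second, deriving the uniform eigenfunction lower bound from uniform convergence of $u_k$ to the eigenfunction on $B$ requires controlling the geometry of $\Om_k^\ast$ near $\partial B$ before you have established that $\Om_k^\ast$ is close to $B$, which is essentially circular; the paper's nondegeneracy lemma avoids this by bounding $\inf u$ for \emph{any} minimizer of the volume-penalized subset problem, regardless of its proximity to a ball. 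Finally, taking $\Lambda_k\to\infty$ is both unnecessary (under your hypothesis $\lb_q(\Om_k)-\lb_q(B)=o(\mathcal A(\Om_k)^2)$, a bounded $\Lambda_k$ already pins $\mathcal A(\Om_k^\ast)$ near $\mathcal A(\Om_k)$) and counterproductive for the regularity theory you would need.
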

In fact, both results are proved in a more general framework than stated above. They take the form of a  Sobolev-Poincar\'e inequality with trace terms in $SBV$ (see Theorem \ref{mainresult.1} in Section \ref{bgn09}) with improved constant. However, as most readers are interested only by the classical setting, we prefer to present our result for Lipschitz sets, and push technicalities in the second part of the paper.\bigbreak

To obtain the (intermediate) inequality in Theorem \ref{mainresult}, we will study a different problem that depends on a parameter $c\geq 0$. For every $c \ge 0$, for any nonnegative $u\in H^1(\Omega)$, we set
\begin{align*}
E^c(u;\Omega)&=E(c+u;\Omega)- \frac{\beta}{2} c^2 \Per(\Om) +\frac{c^q}{q} |\Om|\\
E^c(\Omega)&=\min\{ E^c(u;\Omega) : u\in H^1(\Omega), u\ge 0\}\\
&=\min\{E(u;\Omega):u\in H^1(\Omega),u\geq c\}- \frac{\beta}{2} c^2 \Per(\Om) +\frac{c^q}{q} |\Om|.
\end{align*}
This functional involves  both an obstacle problem and geometric terms. When we minimize  $E^c(\Omega)$ among sets of constant measure, the perimeter term, coming with negative sign, will interact with the solution of the obstacle problem, while the measure part does not play any role. At fixed $\Om$, the geometric terms do not play any role in the obstacle problem.

Clearly, for every $c \ge 0$
  \begin{equation}\label{bgn13}
E^c(\Om)\ge E(\Omega)-\frac{\beta}{2}c^2\Per(\Omega)+\frac{c^q}{q}|\Omega|.
\end{equation}
If $0\le c\le \inf_\Om u_\Om$ then the equality sign occurs in \eqref{bgn13} since $u_\Om-c$ is also solution of the obstacle problem.
If $c > \inf_\Om u_\Om$ then the solution of obstacle problem is different from $u_\Om-c$ and the inequality is strict. As an example, with $q=1$ and $\Omega=B_R$ (the ball of radius $R$), the minimizer $u$ of $E^c(\cdot;B_R)$ takes the form:
\[
u(x)=\left(\frac{R}{n\beta}-c\right)_+ +\frac{R^2-|x|^2}{2n},
\]
and
\[
E^c(B_R)=-\frac{|B_R|}{2}\left(\left(\frac{R}{n\beta}-c\right)_+ +\frac{R^2}{n(n+2)}\right).
\]

\medskip

The strategy to prove Theorems \ref{mainresult} and \ref{mainresult.2}  is based on the following steps.

\medskip
\noindent{\bf Step 1. Minimization of $\Om\mapsto E^c(\Om)$.}
 We prove that the ball minimizes $E^c$, i.e. 
  \begin{equation}\label{bgn10.1}
E^c(\Om) \ge E^c(B)\qquad \text{where } |\Om|=|B|.
\end{equation}

Before describing how we do it, we point out that \eqref{bgn10.1} leads quite directly to the intermediate quantitative inequality \eqref{bgn08} in Theorem \ref{mainresult}.
 Indeed, it is enough take $c=\inf_\Om u_\Omega$ in \eqref{bgn10.1}  and use \eqref{bgn13} to get
  \begin{equation}\label{eq1}
E(\Omega)-E(B)\geq \frac{\beta}{2}\left(\inf u_\Omega\right)^2 \left(\Per(\Omega)-\Per(B)\right).
\end{equation}

The proof of \eqref{bgn10.1} requires the most of our work, and is based the following arguments.
\begin{itemize}
\item We naturally relax the original shape optimization problem
$$\min\{E^c(\Om) : \Om \sq \RR^n, |\Om|=m\},$$
 as a new free discontinuity problem in the space of special functions of bounded variation (see \cite{Buc10, Buc15} and Section \ref{bgn04} below). Precisely, we consider
$$\min \{\Ec (v) : v \in SBV^{1/2}(\RR^n), |\{v >0\}|=m\},$$
where

\[\Ec(v)=\frac{1}{2}\int_{\Rn}|\nabla v|^2 d\Ln+\frac{\beta }{2}\int_{J_v}\left[(\underline{v}^2+2c\underline{v}) +(\overline{v}^2+2c\overline{v})\right]d\Ls -\int_{\Rn}\left(\frac{(c+v)^q-c^q}{q}\right) d\Ln \]
is chosen such that, if $v$ is in $H^1(\Omega)$ with $v\geq 0$ on $\Omega$ then, when  extended  by $0$ outside $\Omega$, we have
\[
\Ec(v)=E^c(v;\Omega)=E(c+v;\Omega)- \frac{\beta}{2} c^2 \Per(\Om) +\frac{c^q}{q} |\Om|.
\]
In particular, for $c=0$ we write $\mathcal{E}=\mathcal{E}^0$ and for any such function $u$ we have $\mathcal{E}^{0}(u)=E(u,\Omega)$.

\item The presence of the perimeter term with negative sign leads to a critical behavior of the boundary energy of the solution near the contact with the obstacle $c$. This is managed by approximation of the boundary energy: the terms of the form
$$\int_{J_v} (v^2+2cv) d\Ls \qquad\mbox{ are replaced by}\qquad \int_{J_v} (v^2+2cv^{1+\vps})d\Ls$$
 for $\vps >0$, small (see Definition \ref{bgn15}).
\item 
We prove that the minimizer of the approximating functional is a radial function with support on a ball.
First, we study qualitative properties of minimizers (non-degeneracy, closedness of the jump set,  radial symmetry) and, second, we show the existence of a solution. The approximation of the jump terms involving the parameter $\epsilon$ is in particular fundamental for the nondegeneracy result (see Lemma \ref{lemmaCaff}).
\item Pass to the limit $\vps \rightarrow 0$ and get that the minimizer of $\Ec$ is a radial function with support on a ball.
\end{itemize}
\medskip
\noindent{\bf Step 2.  Use of the selection principle to control uniformly $ \inf_{x\in \Om}u_\Om$.}
The intermediate inequality \eqref{eq1} together with the quantitative isoperimetric inequality leads to 
\[E(\Omega)-E(B)\geq \frac{\beta}{2}C_n\left(\inf_\Om u_\Omega\right)^2\mathcal{A}(\Omega)^2.\]
This quantitative inequality  is not uniform in $\Om$ since the right hand side is multiplied by $\inf_\Om u_\Omega$.
We  regularize $\Omega$ by replacing it with $\Omega^\text{opt}$, a minimizer of 
\[\omega\mapsto E(\omega)+k|\omega|\]
among all $\omega\subset \Omega$ for some small enough $k>0$. Following the main lines of \cite{BFNT18}, we prove that $\displaystyle \inf_{x \in {\Omega^\text{opt}}} u_{\Omega^\text{opt}}(x)\ge{\alpha}>0$ where $\alpha$   depends on $n,\beta,q$ and $|\Om|$, while $\mathcal{A}(\Om^{\text{opt}})$ is comparable to $\mathcal{A}(\Om)$.  This will conclude the proof. 

\medskip
The paper is organized as follows. In Sections \ref{bgn04} we study the minimization of the geometric functional $\Om \mapsto E^c(\Om)$, by relaxation in $SBV$ and approximation. In Section \ref{bgn12} we prove that the minimizer corresponds to a ball. These two sections concentrate most of the technicalities of the paper. In the last section  we prove Theorems \ref{mainresult} and \ref{mainresult.2}.

 It should be noted that for the quantitative inequality  with Dirichlet boundary condition,  in \cite{bra15} the authors reduced their study to the sole study of the torsion functional $\Om \mapsto \inf_{u\in H^1_0(\Omega)}\int_{\Omega}\left(\frac{1}{2}|\nabla u|^2-u\right)d\Ln$ (corresponding to the case $q=1$). This is due to a hierarchy of the eigenvalues relying on the Kohler-Jobin inequality (see \cite[Chapter 7, Section 7.8.1]{BDP17}). 
 To our knowledge, there is no such inequality with Robin boundary conditions, the reason for which we have to directly work on the general case.

\section{Analysis of the analytic-geometric functional}\label{bgn04}
In this section we study the minimization of $\Om \mapsto E^c(\Om)$ in the class of open, bounded, Lipschitz sets of measure $m$. For that purpose, we introduce the relaxed form of the functional in 
the space of special functions of bounded variation. We refer the reader to \cite{AFP} for an introduction to the SBV space, as subspace in $BV(\RR^n)$. Below, we denote by $Du$  the distributional gradient of $u$ and recall that
$$SBV(\RR^n) =  \{u \in BV(\RR^n) : Du \mbox{ is absolutely continuous with respect to } dx + \Ls\lfloor_{J_u}\}.$$
The following space was introduced in \cite{Buc10},
$$SBV^{1/2}(\mathbb{R}^n)=\{u\in BV_{loc} (\RR^n) : u \ge 0,  u^2 \in SBV(\RR^n)\}. $$
We refer to \cite{Buc10,Buc15} for the main properties of $SBV^{1/2}(\mathbb{R}^n)$.  In particular, we recall the following Poincar\'e inequality with trace term proved in  \cite{Buc15} for $q\in [1,2]$; for all $u \in SBV^{1/2}(\mathbb{R}^n)$, $|\{u>0\}|\le m$:
\begin{equation}\label{bgn16}
 \lb_q(B^m) \left(\int_{\RR^n} u^q dx\right)^\frac{2}{q} \le \int_{\Rn}|\nabla u|^2 d\Ln+\beta \int_{J_u}(\underline{u}^2 +\overline{u}^2)d\Ls.
\end{equation}
Where $B^m$ is the ball of volume $m$, and for every $x\in J_u$, $\underline{u}(x)$ and $\overline{u}(x)$ refer to the lower and upper approximate limits of $u$ at $x$. The constant $ \lb_{ q}(B^{ m})$ is optimal.

\begin{definition}\label{bgn15} Let $c \ge 0$. For any function $v\in SBV^{1/2}(\mathbb{R}^n)$ we set:

\[\Theta(v)=\left(\frac{(c+v)^q-c^q}{q}\right).\]

We introduce the following regularization of $\Ec$
\[
\Ece(v):=\frac{1}{2}\int_{\Rn}|\nabla v|^2 d\Ln+\frac{\beta }{2}\int_{J_v}\left[(\underline{v}^2+2c\underline{v}^{1+\epsilon}) +(\overline{v}^2+2c\overline{v}^{1+\epsilon})\right]d\Ls -\int_{\Rn}\Theta(v) d\Ln, 
\]
which is well posed thanks to \eqref{bgn16}. We shall also work with the  penalised version
\[\Ecek(v)=\Ece(v)+k|\{ v>0\}|,\]
where $k>0$ is a positive constant.
\end{definition}

Below is the key result of this section. Let us denote by
$$
{\mathcal U}_{m}:=\{u\in SBV^{1/2}(\mathbb{R}^n) : |\{u>0 \}| =m\}
$$
the class of admissible functions. 

\begin{theorem}\label{bgn03}
For every $\vps >0$ and $c\ge 0$ the solution of 
$$\min \{ \Ece (u) : u \in {\mathcal U}_{m}\}$$
is a radial function and its support is a ball of measure $m$.
\end{theorem}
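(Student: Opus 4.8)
The plan is to proceed in the standard order for free-discontinuity problems: first establish existence of a minimizer $u$ in $\mathcal{U}_m$, then prove qualitative regularity properties of $u$ (non-degeneracy, essential closedness of the jump set, and boundedness), and finally deduce radial symmetry by a symmetrization/reflection argument, concluding that the support is a ball. For existence, I would take a minimizing sequence $(u_j)$; using the Poincaré inequality \eqref{bgn16} together with the measure constraint $|\{u_j>0\}|=m$ one controls $\int|\nabla u_j|^2$, the jump energy, and the $L^q$-term, hence $u_j^2$ is bounded in $SBV$. Compactness in $SBV$ gives a limit $u$ with $u^2\in SBV$; lower semicontinuity of the Dirichlet and jump parts (Ambrosio's theorems) handles the first two terms, while the negative perimeter-type contribution hidden inside the jump energy is the delicate point — but note that in $\Ece$ the jump term $\int_{J_v}[(\underline v^2+2c\underline v^{1+\epsilon})+(\overline v^2+2c\overline v^{1+\epsilon})]$ is nonnegative and lower semicontinuous for $\epsilon>0$ (this is exactly why the regularization is introduced), so no cancellation occurs at this stage; continuity of the $\Theta$-term follows from the strong $L^q_{\mathrm{loc}}$ convergence plus the mass bound. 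The measure constraint passes to the limit by the usual lower semicontinuity $|\{u>0\}|\le\liminf|\{u_j>0\}|$; one recovers equality either by a dilation argument (rescaling to restore measure $m$ and checking the energy does not increase, using $c\ge0$ and $q<2$) or by first solving the penalized problem for $\Ecek$ and choosing $k$ so that the constraint is active.

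Next I would establish the qualitative properties of a minimizer $u$. Boundedness: truncation $u\wedge M$ for large $M$ decreases every term of the energy (the gradient and jump terms never increase under truncation, and for $M$ large the loss in the $\Theta$-term is controlled), so $u\in L^\infty$. Non-degeneracy from below: this is where the $\epsilon$-regularization is crucial, as flagged in the statement before Lemma \ref{lemmaCaff}; competitors obtained by filling a small ball where $u$ is small must be compared, and the term $2c\overline v^{1+\epsilon}$ (rather than $2c\overline v$) gives the right power to make the comparison work and force $u\ge\delta>0$ on its support, away from a negligible set — equivalently the support has finite perimeter and the jump set is essentially closed. Essential closedness of $J_u$ and a density estimate for $\mathcal{H}^{n-1}$ follow by the now-classical Mumford–Shah type arguments (De Giorgi–Carriero–Leaci), using that near $J_u$ the functional behaves like the Mumford–Shah functional up to lower-order terms. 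I expect the main obstacle to be precisely these non-degeneracy and regularity estimates: the negative sign of the geometric term means the usual monotonicity/comparison arguments must be redone carefully, and one has to verify that the $\epsilon$-regularized jump density is convex and superlinear enough in the relevant regime.

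Finally, with $u$ bounded, non-degenerate, and $\{u>0\}$ of finite perimeter with essentially closed boundary, I would prove radial symmetry. The natural tool is a reflection/polarization argument (two-point rearrangement): polarizing $u$ with respect to an arbitrary hyperplane through a suitable point does not increase $\int|\nabla u|^2$, does not increase the $\Theta$-term (since $\Theta$ is a monotone function of $v$ and $|\{u>0\}|$ is preserved), and does not increase the jump energy — the last point needs the jump density $t\mapsto t^2+2ct^{1+\epsilon}$ to be monotone and the polarization to interact correctly with traces on $J_u$, which it does. Since polarization does not decrease the energy of a minimizer either, $u$ must be symmetric under all such reflections, and a minimizer invariant under reflection in every hyperplane through a point is radial. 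Alternatively one can invoke a Steiner-symmetrization argument slice by slice. Radial symmetry of $u$ together with non-degeneracy forces $\{u>0\}$ to be a ball (a radial set of finite perimeter that is "solid" near the origin by non-degeneracy), of measure exactly $m$ by the constraint, which completes the proof.
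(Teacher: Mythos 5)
Your proof plan reverses the logical order that the paper uses, and this reversal creates a genuine gap in the existence step. The paper \emph{first} assumes a minimizer exists and derives its qualitative properties (non-degeneracy, bounded, $J_u$ essentially closed, support connected, radial, a ball), and then uses precisely these structural facts to prove existence afterwards. That ordering is not an aesthetic choice: the existence argument has to rule out dichotomy of a minimizing sequence — pieces of mass escaping to infinity in different directions — and the paper does this (Steps~3--4 of the existence proof) by showing that if splitting occurred, the limit would have support equal to a union of two disjoint balls, which cannot be a minimizer because any minimizer's support is a single ball. You instead propose the plain direct method: bound the energy via \eqref{bgn16}, get $SBV$-compactness, pass to the limit, and restore the measure constraint by dilation or penalization. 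This collapses on the unbounded domain $\RR^n$: $SBV$ (or $SBV^{1/2}$) compactness is only local, a priori there is no $L^\infty$ bound on the minimizing sequence (that is proved only \emph{for minimizers}), and nothing in your sketch excludes vanishing or dichotomy. The dilation/penalization patch you mention fixes underfilling of the constraint \emph{once a nontrivial limit with $0<|\{u>0\}|<m$ is in hand}, but it does not by itself prevent the limit from being zero or prevent mass from being lost in two separated bumps. That is the missing idea, and it is the reason the concentration argument via Lemma~\ref{LemmaLocalization} and the translated subsequences is needed.

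A secondary gap is in the symmetry step. Local radiality (your polarization / the paper's reflection across mass-bisecting hyperplanes plus analytic continuation) is not enough: a radial, non-degenerate function supported on an \emph{annulus} is invariant under every reflection through its center, so "invariant under all reflections $\Rightarrow$ ball" is false as stated. The paper explicitly excludes the annulus in Step~4 of the symmetry proof by a domain-variation argument using the ODE~\eqref{eq:ODE} and the Robin condition at both boundary spheres. Your write-up asserts that "a radial set of finite perimeter that is solid near the origin by non-degeneracy" must be a ball, but non-degeneracy says $u\ge\delta$ on $\{u>0\}$, not that $0$ lies in the support; you would need the extra variational computation (or a capacity/connectedness argument) to rule out the annular case. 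The non-degeneracy and $\epsilon$-regularization parts of your plan are aligned with the paper (Lemma~\ref{lemmaCaff} is indeed the Caffarelli-type truncation argument you describe, and the exponent $1+\epsilon$ is used exactly where you expect), and the essential closedness via Mumford--Shah quasi-minimality matches Lemma~\ref{bounded}.
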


The rest of the section is devoted to the proof of this theorem.
The strategy is as follows.
\begin{itemize}[label=\textbullet]
\item We first assume the existence of a minimizer $u$ and  study its properties to arrive at the conclusion that it is a radial function with the support being a ball of measure $m$.
\item We then prove the existence of a minimizer by analyzing a minimizing sequence using the a priori properties proved before. The key point is to show that, up to a subsequence and up to translations, a minimizing sequence necessarily has to concentrate the mass around the origin and to converge.\end{itemize}


\subsection{Preparatory results}
We write $\Ece$ as
\[\Ece(u)=Q(u)+N^{c,\epsilon}(u)-\int_{\mathbb{R}^n}\Theta(u)d\Ln,\]
where
\[Q(v)=\frac{1}{2}\int_{\Rn}|\nabla v|^2 d\Ln+\frac{\beta }{2}\int_{J_v}(\underline{v}^2+\overline{v}^2)d\Ls\]
and
\[N^{c,\epsilon}(v)=\beta c \int_{J_v}(\underline{v}^{1+\epsilon}+\overline{v}^{1+\epsilon})d\Ls.\]

For any non trivial $u\in SBV^{1/2}$ such that $\Ece(u)<\infty$, we have for small $t>0$
\[\Ece(tu)<0.\]
 While $\Ece(u)$ is not necessarily positive, its terms coming with positive and negative sign, control each other in certain cases. We summarize this observation as follows.
\begin{lemma}\label{ResultControl}
For every function  $u\in {\mathcal U}_{m}$ there is constant $C>0$, depending on $n,m$, $\beta,q,c,\Ece(u)$ such that
\[Q(u)+N^{c,\epsilon}(u)+\Vert \Theta (u)\Vert_{L^1}<C.\]
\end{lemma}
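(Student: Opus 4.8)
The plan is to bound $\Ece(u)$ from below by a coercive combination of the three nonnegative quantities $Q(u)$, $N^{c,\epsilon}(u)$ and $\|\Theta(u)\|_{L^1}$. The only obstacle to coercivity is the negative term $-\int_{\RR^n}\Theta(u)\,d\Ln$, so the whole argument reduces to absorbing $\|\Theta(u)\|_{L^1}$ into $Q(u)$ at the cost of a controlled additive constant. First I would observe that, since $u\in {\mathcal U}_m$, we have $|\{u>0\}|=m$ and $c\geq 0$, so pointwise $\Theta(u)=\frac{(c+u)^q-c^q}{q}\leq \frac{1}{q}\bigl((c+u)^q-c^q\bigr)\leq c^{q-1}u + \frac1q u^q$ when $q\le 2$ (elementary convexity/mean-value estimate on $t\mapsto t^q$), hence
\[
\|\Theta(u)\|_{L^1}\;\leq\; c^{q-1}\!\int_{\RR^n} u\,d\Ln \;+\;\frac1q\!\int_{\RR^n} u^q\,d\Ln.
\]
Both integrals on the right are over the set $\{u>0\}$ of measure $m$, so by Hölder's inequality it suffices to control $\int u^q$ for $q\in[1,2]$, and in fact $\int u\le m^{1-q/2}\bigl(\int u^q\bigr)^{\,?}$-type interpolation reduces everything to $\left(\int u^q\right)^{2/q}$.

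Next I would invoke the Poincaré inequality with trace term \eqref{bgn16}: since $|\{u>0\}|=m\le m$,
\[
\lb_q(B^m)\left(\int_{\RR^n} u^q\,d\Ln\right)^{2/q}\;\leq\;\int_{\RR^n}|\nabla u|^2\,d\Ln+\beta\int_{J_u}(\underline u^2+\overline u^2)\,d\Ls\;=\;2Q(u).
\]
Writing $s:=\left(\int u^q\right)^{1/q}\ge 0$, the two previous displays combined with Hölder on the fixed-measure set give a bound of the shape $\|\Theta(u)\|_{L^1}\le A\,s + B\,s^q$ with $A,B$ depending only on $n,m,\beta,q,c$, while $\lb_q(B^m)s^2\le 2Q(u)$. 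Therefore
\[
\Ece(u)\;=\;Q(u)+N^{c,\epsilon}(u)-\int_{\RR^n}\Theta(u)\,d\Ln\;\geq\; Q(u)+N^{c,\epsilon}(u)-A\,s-B\,s^q,
\]
and since $q\le 2$, the superlinear term $\frac12 Q(u)\ge \frac14\lb_q(B^m)s^2$ dominates $A s + B s^q$ for $s$ large; more precisely $\frac14\lb_q(B^m)s^2 - A s - Bs^q \ge -C_0$ for a constant $C_0=C_0(n,m,\beta,q,c)$. This yields
\[
\Ece(u)\;\geq\;\tfrac12 Q(u)+N^{c,\epsilon}(u)-C_0,
\]
hence $\tfrac12 Q(u)+N^{c,\epsilon}(u)\le \Ece(u)+C_0$, which bounds $Q(u)$ and $N^{c,\epsilon}(u)$; feeding $Q(u)$ back into the Poincaré inequality bounds $s$, and then $\|\Theta(u)\|_{L^1}\le As+Bs^q$ is bounded as well. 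Collecting the three bounds gives the claimed constant $C$ depending on $n,m,\beta,q,c,\Ece(u)$.

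The only place requiring care — the part I would expect to be mildly technical rather than hard — is the chain of elementary inequalities in the first step: the pointwise estimate on $\Theta(u)$ for all $q\in[1,2]$ and $c\ge 0$, and the Hölder interpolation of $\int u$ and $\int u^q$ against the fixed measure $m$ of the support, so that everything is expressed through the single quantity $s=\left(\int u^q\right)^{1/q}$ that Poincaré controls. Once that reduction is in place, absorbing $As+Bs^q$ into $\tfrac14\lb_q(B^m)s^2$ is just the statement that a quadratic with positive leading coefficient is bounded below, and the rest is bookkeeping of the dependence of constants.
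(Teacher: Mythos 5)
Your proof is correct and takes essentially the same approach as the paper: both bound $\|\Theta(u)\|_{L^1}$ through the Poincar\'e inequality \eqref{bgn16} so that the negative term in $\Ece(u)$ is sublinear in $Q(u)$ and can be absorbed, yielding a uniform bound on $Q(u)+N^{c,\epsilon}(u)$ and, feeding that back into \eqref{bgn16}, on $\|\Theta(u)\|_{L^1}$ as well.
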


\begin{proof}
For any such $u$, we have, using \eqref{bgn16}
\[\int_{\mathbb{R}^n}ud\Ln\leq \lambda_1(B^m)^{-\frac{1}{2}}\left(\int_{\mathbb{R}^n}|\nabla u|^2d\Ln+\beta\int_{J_u}(\underline{u}^2+\overline{u}^2)d\Ls\right)^{\frac{1}{2}}\]
and
\[\left(\int_{\mathbb{R}^n}u^q d\Ln\right)^{1/q}\leq \lambda_q(B^m)^{-\frac{1}{2}}\left(\int_{\mathbb{R}^n}|\nabla u|^2d\Ln+\beta\int_{J_u}(\underline{u}^2+\overline{u}^2)d\Ls\right)^{\frac{1}{2}},\]
where $B^m$ is the ball of volume $m$. We then know that for a certain constant $C$, 
\[\int_{\mathbb{R}^n}\Theta(u)d\Ln\leq C (Q(u)^{1/2}+Q(u)^{q/2}),\]
so that
\[Q(u)+N^{c,\epsilon}(u)-C (Q(u)^{1/2}+Q(u)^{q/2})\leq \Ece(u).\]
This means that $Q(u)+N^{c,\epsilon}(u)$ is bounded by a constant that depends only on $\Ece(u)$ and $C$ (which depends on $n,m,\beta,q,c$); this proves the result.
\end{proof}

Let us now study the monotonicity of $\Ece$ on scale change. This will imply that minimizing $\Ece(u)$ under the constraint $|\{ u>0\}|=m$ or $|\{ u>0\}|\leq m$ is equivalent.
\begin{lemma}\label{bgn17}
Let $u\in SBV^{1/2}(\RR^n)$ be a non-trivial function and $t>1$. Then:
\[\Ece\left(u\left(\cdot/t\right)\right)<t^{n}\Ece(u)\]
\end{lemma}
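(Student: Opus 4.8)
The strategy is to compute each of the three terms of $\Ece$ under the rescaling $u_t := u(\cdot/t)$ and compare with $t^n\Ece(u)$, using $t>1$ term by term. Write $u_t(x) = u(x/t)$; then by the change of variables $y = x/t$ one has $\int_{\Rn}|\nabla u_t|^2\,d\Ln = t^{n-2}\int_{\Rn}|\nabla u|^2\,d\Ln$, while $J_{u_t} = tJ_u$ so that $\int_{J_{u_t}}(\underline{u_t}^2+\overline{u_t}^2)\,d\Ls = t^{n-1}\int_{J_u}(\underline u^2+\overline u^2)\,d\Ls$ and similarly $N^{c,\epsilon}(u_t) = t^{n-1}N^{c,\epsilon}(u)$; finally $\int_{\Rn}\Theta(u_t)\,d\Ln = t^n\int_{\Rn}\Theta(u)\,d\Ln$. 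Thus
\[
\Ece(u_t) = t^{n-2}Q_\nabla(u) + t^{n-1}Q_J(u) + t^{n-1}N^{c,\epsilon}(u) - t^n\!\int_{\Rn}\!\Theta(u)\,d\Ln,
\]
where I have split $Q(u) = Q_\nabla(u) + Q_J(u)$ into the Dirichlet and jump parts. Comparing with $t^n\Ece(u) = t^nQ_\nabla(u) + t^nQ_J(u) + t^nN^{c,\epsilon}(u) - t^n\int_{\Rn}\Theta(u)\,d\Ln$, the $\Theta$ contributions cancel and the claim reduces to showing
\[
t^{n-2}Q_\nabla(u) + t^{n-1}\bigl(Q_J(u)+N^{c,\epsilon}(u)\bigr) < t^n\,Q_\nabla(u) + t^n\bigl(Q_J(u)+N^{c,\epsilon}(u)\bigr).
\]

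Since $t>1$ we have $t^{n-2} < t^n$ and $t^{n-1} < t^n$, and all of $Q_\nabla(u)$, $Q_J(u)$, $N^{c,\epsilon}(u)$ are nonnegative, so the inequality above holds provided at least one of these three quantities is strictly positive. This is where non-triviality of $u$ enters: if $u$ is not identically zero in $SBV^{1/2}(\RR^n)$, then $u^2 \in SBV(\RR^n)$ is non-trivial, hence its distributional derivative $D(u^2)$ does not vanish, which forces either $\nabla(u^2) \neq 0$ on a set of positive measure (giving $Q_\nabla(u) > 0$ since $\nabla(u^2) = 2u\nabla u$) or $\Ls(J_{u^2}) > 0$ with $\overline{u^2} \neq \underline{u^2}$ there (giving $Q_J(u) > 0$ because then $\overline u^2 + \underline u^2 > 0$ on $J_u = J_{u^2}$). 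Hence the left-hand side is strictly smaller than the right-hand side, which is exactly $\Ece(u_t) < t^n\Ece(u)$.

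\textbf{Main obstacle.} The computation is essentially routine; the only genuine points requiring care are (i) confirming the scaling exponents for the jump integrals — that $\underline{u_t}, \overline{u_t}$ at a point $x \in tJ_u$ equal $\underline u, \overline u$ at $x/t$, and that $\Ls(tA) = t^{n-1}\Ls(A)$ — and (ii) the strict-positivity argument guaranteeing that a non-trivial $u$ makes at least one of $Q_\nabla(u)$, $Q_J(u)$ strictly positive, so the strict inequality is not lost. One should also note that Lemma~\ref{ResultControl} is not needed here (all terms are finite or the statement is vacuous once $\Ece(u) = +\infty$), though if one wanted to be careful about the case $\Ece(u) = +\infty$ the inequality is trivially true. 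I would present the three scaling identities, then the one-line comparison, then the non-triviality remark, in that order.
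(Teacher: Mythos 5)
Your proof is correct and follows essentially the same route as the paper, which simply remarks that the claim ``is immediate by a change of variable and using $t^{n-1},t^{n-2}<t^n$.'' You usefully make explicit why the inequality is \emph{strict}: a non-trivial $u\in SBV^{1/2}(\RR^n)$ has $u^2\in SBV(\RR^n)\subset L^1(\RR^n)$, so $D(u^2)\neq 0$ and hence at least one of $Q_\nabla(u),Q_J(u)$ is strictly positive — a detail the paper leaves implicit.
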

\begin{proof}
It is immediate by a change of variable and using $t^{n-1},t^{n-2}<t^n$.
\end{proof}

We have to compare a minimizer $u$ with a function whose support does not necessarily have the same measure. The following lemma allows us to use $\Ecek$, the volume-penalised version of $\Ece$ with a suitable $k$.

\begin{lemma}\label{ResultMeasurePenalization}
Let $u\in {\mathcal U}_{m}$ be a minimizer of the functional $\Ece$. Then
\begin{itemize}[label=\textbullet]
\item For $k=\frac{-\Ece(u)}{m}$, $u$ is a minimizer of $\Ecek$ in the class
\[\{ v\in SBV^{1/2}(\RR^n):|\{v>0\}|\leq m\}.\]
\item For $k=\frac{2\Vert\Theta(u)\Vert_{L^1}}{m}$, $u$ is a minimizer of $\Ecek$ in the class
\[\{ v\in SBV^{1/2}(\RR^n):|\{v>0\}|\geq m,\ \Vert \Theta(v)\Vert_{L^1}\leq 2\Vert \Theta(u)\Vert_{L^1}\}.\]
\end{itemize}
\end{lemma}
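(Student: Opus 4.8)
The plan is to deduce both statements from the behaviour of $\Ece$ under the dilations $v_t:=v(\cdot/t)$, combined with the minimality of $u$ in $\Um$; Lemma~\ref{bgn17} already provides half of what is needed. First I would record the exact scaling: a change of variables gives $|\{v_t>0\}|=t^n|\{v>0\}|$ and
\[
\Ece(v_t)=t^{n-2}\,\tfrac12\!\int_{\RR^n}\!|\nabla v|^2\,d\Ln+t^{n-1}\Big(\tfrac{\beta}{2}\!\int_{J_v}\!(\underline v^2+\overline v^2)\,d\Ls+\beta c\!\int_{J_v}\!(\underline v^{1+\epsilon}+\overline v^{1+\epsilon})\,d\Ls\Big)-t^n\!\int_{\RR^n}\!\Theta(v)\,d\Ln.
\]
Because $\Theta\ge0$, for $t\ge1$ this yields $\Ece(v_t)\le t^n\Ece(v)$ (this is Lemma~\ref{bgn17}); and because $n\ge2$, for $0<t\le1$ the powers $t^{n-2},t^{n-1}$ are $\le1$, so the gradient and jump terms do not increase and one gets $\Ece(v_t)\le\Ece(v)+(1-t^n)\|\Theta(v)\|_{L^1}$. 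These two one‑sided inequalities, together with the minimality of $u$, are the whole engine.

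For the first bullet I would first note that $k>0$: since $u$ is non-trivial, $tu\in\Um$ and $\Ece(tu)<0$ for small $t$, hence $\Ece(u)\le\Ece(tu)<0$. Given $v$ with $m':=|\{v>0\}|\le m$, the case $v\equiv0$ is immediate ($\Ecek(v)=0=\Ecek(u)$), so I assume $0<m'\le m$ and dilate \emph{up}: $t:=(m/m')^{1/n}\ge1$ puts $v_t$ in $\Um$, and minimality plus the first inequality gives $\Ece(u)\le\Ece(v_t)\le t^n\Ece(v)=\tfrac{m}{m'}\Ece(v)$, i.e.\ $\Ece(v)\ge\tfrac{m'}{m}\Ece(u)$. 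Adding $km'$ and using $k=-\Ece(u)/m$, a short computation shows $\tfrac{m'}{m}\Ece(u)+km'=\Ece(u)+km$, hence $\Ecek(v)\ge\Ecek(u)$.

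For the second bullet I would dilate \emph{down}. Here $\|\Theta(u)\|_{L^1}>0$ (as $u$ is non-trivial and $\Theta>0$ on $\{u>0\}$), so $k>0$. Let $v$ be in the prescribed class with $m':=|\{v>0\}|\ge m$; I may assume $m'<\infty$, since otherwise $\Ecek(v)=+\infty$. Then $t:=(m/m')^{1/n}\le1$ puts $v_t$ in $\Um$, and minimality, the second inequality, and the constraint $\|\Theta(v)\|_{L^1}\le2\|\Theta(u)\|_{L^1}$ combine to
\[
\Ece(u)\le\Ece(v_t)\le\Ece(v)+(1-t^n)\|\Theta(v)\|_{L^1}\le\Ece(v)+\Big(1-\tfrac{m}{m'}\Big)2\|\Theta(u)\|_{L^1}.
\]
Since $\big(1-\tfrac{m}{m'}\big)2\|\Theta(u)\|_{L^1}=\tfrac{m'-m}{m'}\,km\le k(m'-m)$, adding $km$ yields $\Ecek(u)\le\Ecek(v)$, as desired.

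I expect the only real subtlety to be the second bullet: contracting the support makes the $-\int\Theta(v)$ term less negative, so no unconditional inequality is possible — this is exactly why the class there is cut down by $\|\Theta(v)\|_{L^1}\le2\|\Theta(u)\|_{L^1}$, which (with $n\ge2$ keeping the Dirichlet and jump terms non-increasing under the dilation) lets the resulting loss be absorbed into the volume penalty $k|\{v>0\}|$ for the stated value of $k$. Everything else is bookkeeping, together with the small separate checks: $v\equiv0$ in the first bullet, $|\{v>0\}|=\infty$ in the second, and the strict signs $\Ece(u)<0$ respectively $\|\Theta(u)\|_{L^1}>0$ that guarantee $k>0$ in the two cases.
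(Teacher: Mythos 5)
Your proposal is correct and follows essentially the same route as the paper: in both cases one dilates $v$ by $t=(m/m')^{1/n}$ to land in $\Um$, tracks the homogeneities $t^{n-2},t^{n-1},t^n$ of the gradient, jump, and $\Theta$ terms, and then converts the resulting one-sided scaling inequalities plus minimality of $u$ into $\Ecek(u)\le\Ecek(v)$; the paper writes the dilation factor as $1\pm\eta$ rather than $t$, but the computation is the same. The extra checks you flag ($v\equiv 0$, $|\{v>0\}|=\infty$, and $k>0$ via $\Ece(u)<0$ and $\|\Theta(u)\|_{L^1}>0$) are sound and merely make explicit what the paper leaves implicit.
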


\begin{proof}
\textbf{First case}: Let $v$ be such a function, we write $\frac{|\{ v>0\}|}{m}=1-\eta$ with $\eta\in ]0,1[$.
Let 
$$
w(x):=v((1-\eta)^{1/n}x).
$$ 
 Notice that $w$ is in $\Um$ with 
\begin{align*}
\Ece(w)&=(1-\eta)^{-1+\frac{2}{n}}\frac{1}{2}\int_{\Rn}|\nabla v|^2 d\Ln
\\ &+(1-\eta)^{-1+\frac{1}{n}}\frac{\beta }{2}\int_{J_v}\left[(\underline{v}^2+2c\underline{v}^{1+\epsilon}) +(\overline{v}^2+2c\overline{v}^{1+\epsilon})\right]d\Ls-(1-\eta)^{-1}\int_{\mathbb{R}^n}\Theta(v) d\Ln,
\end{align*}
so that $\Ece(w)\leq (1-\eta)^{-1}\Ece(v)$.  The minimality of $u$ yields  
\[(1-\eta)\Ece(u)\leq \Ece(v),\]
which, with our definition of $k$ and $\eta$, is exactly
\[\Ecek(u)\leq \Ecek(v).\]

\noindent \textbf{Second case}: We proceed in the same way. Let us consider such a function $v$, write $\frac{|\{ v>0\}|}{m}=1+\eta$ where $\eta>0$. Let
$$
w(x):=v((1+\eta)^{1/n}x).
$$ 
 Again  $w$ is in $\Um$ with
\begin{align*}
\Ece(w)&=(1+\eta)^{-1+\frac{2}{n}}\frac{1}{2}\int_{\Rn}|\nabla v|^2 d\Ln\\
&+(1+\eta)^{-1+\frac{1}{n}}\frac{\beta }{2}\int_{J_v}\left[(\underline{v}^2+2c\underline{v}^{1+\epsilon}) +(\overline{v}^2+2c\overline{v}^{1+\epsilon})\right]d\Ls-(1+\eta)^{-1}\int_{\mathbb{R}^n}\Theta(v)d\Ln.
\end{align*}
For any $\eta>0$, we have the inequalities
\[(1+\eta)^{-1+\frac{2}{n}},(1+\eta)^{-1+\frac{1}{n}}\leq 1,\ \ (1+\eta)^{-1}\geq 1-\eta.\]

From the minimality of $u$ and the choice of $v$ we can  write
\begin{align*}
\Ece(u)&\leq \Ece(w)&\\
& \leq \Ece(v)+\eta \int_{\mathbb{R}^n}\Theta(v) d\Ln\\
&\leq \Ece(v)+2\eta\int_{\mathbb{R}^n}\Theta(u) d\Ln,&
\end{align*}
which, following the definition of $k$ and $\eta$ yields
\[\Ecek(u)\leq \Ecek(v).\]
\end{proof}
\bigbreak

\subsection{Nondegeneracy of the minimizers} In this part, we prove that a minimizer lies above a strictly positive threshold, on the set where it is non vanishing. We refer to \cite{Buc14}, \cite{Caf16}, \cite{Buc15}, \cite{BFNT18} for similar arguments.

\begin{lemma}\label{lemmaCaff}
Let $u\in {\mathcal U}_{m}$ be a minimizer of the functional $\Ece$. There exists $\delta>0$, depending on $n,c,\beta,m,\epsilon,\Ece(u)$, such that $u\geq\delta 1_{\{u>0\}}$.
\end{lemma}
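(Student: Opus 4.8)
The plan is to argue by contradiction. Assume that $|\{0<u<\delta\}|>0$ for every $\delta>0$, i.e.\ that the essential infimum of $u$ on $\{u>0\}$ is $0$; I will produce, for a suitable small $t>0$, a competitor with strictly smaller energy. First note that, $u$ being a minimiser, $\Ece(u)$ is finite, and since $\Ece(tu)<0$ for small $t>0$ while $tu\in\Um$, minimality forces $\Ece(u)<0$. Hence Lemma~\ref{ResultMeasurePenalization} applies with $k:=-\Ece(u)/m>0$, and $u$ minimises $\Ecek$ over the larger class $\{v\in SBV^{1/2}(\Rn):|\{v>0\}|\le m\}$. Passing to $\Ecek$ on this class is essential, since the competitor below shrinks the support.

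As competitor I take the truncation from below $v_t:=u\,1_{\{u>t\}}$. For a.e.\ $t>0$ the set $\{u>t\}=\{u^2>t^2\}$ has finite perimeter (coarea formula for $u^2\in SBV(\Rn)$), and writing $v_t^2=(u^2-t^2)_+ + t^2\,1_{\{u>t\}}$ one checks $v_t\in SBV^{1/2}(\Rn)$ with $|\{v_t>0\}|=m-|A_t|\le m$, where $A_t:=\{0<u\le t\}$. I then compare $\Ecek(v_t)$ with $\Ecek(u)$ term by term. The Dirichlet term contributes $-\tfrac12\int_{\{0<u<t\}}|\nabla u|^2\,d\Ln\le0$ to the difference. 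For the boundary term one has $J_{v_t}\subseteq J_u\cup\partial^*\{u>t\}$: on $J_u$ the boundary energy of $v_t$ never exceeds that of $u$ (where $\overline u>t$ the one-sided traces of $v_t$ are $0$ and $\overline u$; where $\overline u\le t$ the jump disappears), while on $\partial^*\{u>t\}\setminus J_u$, where by the $BV$ structure theorem $u$ has approximate limit $t$, the function $v_t$ jumps between $0$ and $t$, contributing at most $\big(\tfrac\beta2 t^2+\beta c\,t^{1+\epsilon}\big)\Per(\{u>t\})$. The term $-\int\Theta(v_t)\,d\Ln$ contributes $\int_{A_t}\Theta(u)\,d\Ln\le C\,t\,|A_t|$ with $C=C(c,q)$, and the penalisation contributes $-k|A_t|$. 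Summing, minimality ($\Ecek(u)\le\Ecek(v_t)$) gives, for a.e.\ $t\le t_0:=\min\{1,k/(2C)\}$,
\[
\tfrac12\int_{\{0<u<t\}}|\nabla u|^2\,d\Ln+\tfrac k2\,|A_t|\ \le\ \Big(\tfrac\beta2 t^2+\beta c\,t^{1+\epsilon}\Big)\,\Per(\{u>t\}).
\]

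Here the role of the $\epsilon$-regularisation is transparent: the new jump carries the factor $t^{1+\epsilon}=o(t)$, whereas the un-regularised boundary term of $\Ec$ (carrying $\beta c\,\overline v$ instead of $\beta c\,\overline v^{1+\epsilon}$) would produce $\beta c\,t\,\Per(\{u>t\})$, a contribution of the same order as the penalisation gain $k|A_t|$ and impossible to absorb. This is precisely why the lemma is proved for $\Ece$ rather than for $\Ec$.

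The hard part is the last step: upgrading the displayed differential inequality to the conclusion that $|A_t|=0$ for all $t$ below some $\delta>0$. I would do this by a De Giorgi--Alt--Caffarelli iteration over dyadic levels $t_j=2^{-j}t_0$. On each annulus $\{t_{j+1}<u<t_j\}$ one selects, via coarea slicing, an intermediate level $\tau_j$ with $\Per(\{u>\tau_j\})\le \tfrac{2}{t_j}\,|D(\min(u,t_j)-\min(u,t_{j+1}))|(\Rn)$, estimates the right-hand side using the displayed inequality (which bounds both $|A_{t_j}|$ and $\int_{\{t_{j+1}<u<t_j\}}|\nabla u|^2\,d\Ln$) together with the relative isoperimetric inequality, and arrives at a decay relation roughly of the form $|A_{t_{j+1}}|\le C\,t_j^{\,a}\,|A_{t_j}|^{1+b}$ with $a,b>0$; such a relation collapses to $|A_{t_{j_0}}|=0$ once $j_0$ is large enough in terms of the data, and one sets $\delta:=t_{j_0}$. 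Keeping track of constants, $\delta$ depends only on $n,c,\beta,m,\epsilon$ and --- through $k$ and the a priori bounds of Lemmas~\ref{ResultControl}--\ref{ResultMeasurePenalization}, in particular $\int_0^{\infty} t\,\Per(\{u>t\})\,dt=\tfrac12|D(u^2)|(\Rn)<\infty$ --- on $\Ece(u)$. This iteration, together with the careful bookkeeping in the boundary-energy comparison above (which rests on the standard structure theory of superlevel sets of $BV$ functions), is the technical heart of the argument; for its implementation I would follow closely \cite{Buc14,Caf16,Buc15,BFNT18}.
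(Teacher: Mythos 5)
Your setup is on the right track and coincides with the paper's up to a point: you invoke Lemma~\ref{ResultMeasurePenalization} with $k=-\Ece(u)/m$, test against the truncation $u\,1_{\{u>t\}}$, and correctly isolate the role of the $\epsilon$-regularisation (the new jump carries $t^{1+\epsilon}=o(t)$, which is exactly what the paper also exploits). Two problems, however. A minor one: your displayed inequality puts $\Per(\{u>t\})$ on the right-hand side, but the newly created jump lives only on $\partial^*\{u>t\}\setminus J_u$, and $\Per(\{u>t\};\RR^n\setminus J_u)$ tends to $0$ as $t\to0^+$ whereas $\Per(\{u>t\})\to\Ls(J_u)>0$. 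The coarser bound is not useless, but it destroys the decay of the right-hand side that your iteration would need; the paper is careful to keep $\partial^*\{u>t\}\setminus J_u$ throughout.

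The more serious gap is the final step. A dyadic recursion of the type $|A_{t_{j+1}}|\le C\,t_j^a\,|A_{t_j}|^{1+b}$ with $t_j=2^{-j}t_0$ forces super-exponential decay of $|A_{t_j}|$, but it does \emph{not} force $|A_{t_{j_0}}|=0$ at any finite $j_0$: nothing stops $|A_{t_j}|$ from being strictly positive for every $j$ while decaying like, say, $e^{-2^j}$, which is perfectly compatible with $\operatorname*{ess\,inf}_{\{u>0\}}u=0$. So no contradiction is obtained. The paper's mechanism is different and is the crux of the proof: it introduces the bounded, nondecreasing quantity $f(t)=\int_{\{0<u\le t\}}u^\epsilon|\nabla u|\,d\Ln$ (so that $f'(t)=t^\epsilon\Per(\{u\le t\};\RR^n\setminus J_u)$), uses the optimality inequality \emph{together with the $BV\hookrightarrow L^{n/(n-1)}$ embedding} to get the super-linear differential inequality $f(t)\le C\big[t f'(t)\big]^{1+\epsilon/(2n)}$, i.e.\ $\frac{d}{dt}\big[f(t)^{\epsilon/(2n+\epsilon)}\big]\ge \frac{1}{Ct}$, and then integrates: since $f$ is bounded (which your own estimate $\int_0^\infty t\,\Per(\{u>t\})\,dt<\infty$ is related to, but one needs the $\epsilon$-weighted version), the logarithmic divergence of $\int_{t_0}^{t_1}dt/t$ forces $f(t_0)=0$ for every $t_0$ below an explicit $\delta>0$. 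Applying the optimality inequality at $t=\delta$ then kills $|A_\delta|$ outright. Without the $BV$ embedding step producing the strictly super-linear exponent $1+\epsilon/(2n)$ in $tf'(t)$, and without the integration of the resulting ODE for a bounded monotone $f$, the argument cannot close; your sketch omits both.
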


\begin{proof}
In this proof, which follows the main lines of \cite[Theorem 3.2]{Caf16}, we denote by $C$ a positive constant that may change from line to line which  depends on the parameters only. Let $k$ be defined as in the first part of the previous result.
We introduce the following function
\[
f_{t_{\text{min}}}(t)=\int_{\{ t_{\text{min}} \leq u\leq t\}}u^{\epsilon}|\nabla u|d\Ln,
\]
where $0<t_{\text{min}}<t$. Since $(u-t_{\text{min}})_+$ is in $SBV$, we can apply the coarea area for $SBV$ functions (see \cite{Caf16}) to get
\[
f_{t_{\text{min}}}(t)=\int_{t_{\text{min}}}^t s^{\epsilon}\text{Per}(\{ u\leq s\}; \mathbb{R}^n\setminus J_u)ds.
\]
Define $f(t)=f_{t_{\text{min}}=0}(t)$; by taking the limit $t_{\text{min}}\rightarrow 0^+$ in the above formula, we see that by monotone convergence
\[
f(t)=\int_{\{ 0 \leq u\leq t\} }u^{\epsilon}|\nabla u|d\Ln=\int_{0}^t s^{\epsilon}\text{Per}(\{ u\leq s\}; \mathbb{R}^n\setminus J_u)ds.
\]

The first form yields the fact that $f$ is bounded (for bounded $t$ at least). Indeed, it is controlled by the positive part of $\Ece(u)$, and more precisely by $Q(u)+k|\{ u>0\}|$
\[f(t)\leq t^{\epsilon}\int_{\{ u\leq t\} }|\nabla u|d\Ln\leq t^{\epsilon}|\{ 0<u\leq t\}|^{1/2}\left(\int_{\{ 0<u\leq t\}}|\nabla u|^2d\Ln\right)^{1/2}.
\]
We can then apply Lemma \ref{ResultControl} to infer that the positive part of $\Ece(u)$ is controlled by a constant that depends on $\Ece(u)$ and the parameters of the problem. 
As $u$ is a minimizer, $f$ is bounded by a constant that only depends on the parameters.
\par
Next, we use the optimality of $u$ against $u1_{\{u>t\}}$ in view of Lemma \ref{ResultMeasurePenalization}, which gives
\[\Ecek(u)\leq \Ecek\left(u1_{\{u>t\}}\right)\]
or, after computations,
\begin{multline*}
Q\left(u1_{\{u\leq t\}}\right)+N^{c,\epsilon}\left(u1_{\{u\leq t\}}\right)+k|\{ 0<u\leq t\}|-\int_{\{ 0<u\leq t\}}\Theta(v)d\Ln\\
\leq \frac{\beta}{2}\int_{\partial^*\{ u>t\} \setminus J_u}(u^2+2cu^{1+\epsilon})d\Ls.
\end{multline*}

In the rest of the proof, we will only consider $t$ small enough
\[t<\Theta^{-1}(k/2)\wedge c^{\frac{1}{1-\epsilon}},\]
which allows us to write 
$$
k|\{ 0<u\leq t\}|-\int_{\{ 0<u\leq t\}}\Theta(v)d\Ln\geq \frac{k}{2}|\{ 0<u\leq t\}|,
$$
and
\begin{align*}
\frac{\beta}{2}\int_{\partial^*\{ u>t\} \setminus J_u}(u^2+2cu^{1+\epsilon})d\Ls&=\frac{\beta}{2}(t^2+2ct^{1+\epsilon})\Ls(\partial^*\lbrace u>t\rbrace \setminus J_u)\\
&=\frac{\beta}{2}(t^{2-\epsilon}+2ct)f'(t)\leq \frac{3}{2}\beta c t f'(t).
\end{align*}
With these, the optimality condition becomes
\begin{equation}\label{ConditionOptu}
Q\left(u1_{\{u\leq t\}}\right)+N^{c,\epsilon}\left(u1_{\{u\leq t\}}\right)+\frac{k}{2}|\{ 0<u\leq t\}|\leq\frac{3}{2}\beta c t f'(t).
\end{equation}
By H\"older,
\begin{multline*}
f(t)\leq |\{ 0<u\leq t\}|^{\frac{n-(n-1)\epsilon}{2n}}\Vert 1_{\{0<u<t\}}\nabla u\Vert_{L^2}\ \Vert 1_{\{0<u\leq t\}}u^2\Vert_{L^{\frac{n}{n-1}}}^\frac{\epsilon}{2}\\
\leq C\Big[ t f'(t)\Big]^{1-\frac{n-1}{2n}\epsilon}\ \Vert u^21_{\{0<u\leq t\}}\Vert_{L^{\frac{n}{n-1}}}^\frac{\epsilon}{2}.
\end{multline*}
To estimate the last factor, we use the continuity of the embedding $BV(\RR^n)\hookrightarrow L^{\frac{n}{n-1}}(\RR^n)$  and \eqref{ConditionOptu} to get
\begin{multline*}
c_n\Vert u^2 1_{\{0<u\leq t\}}\Vert_{L^{\frac{n}{n-1}}}\leq [ u^2 1_{\{0<u\leq t\}}]_{BV}
=|D(u^21_{\{0<u\leq t\}})|(\mathbb{R}^n)\\
=\int_{\{ 0<u\leq t\}}2u|\nabla u|\ d\Ln+\int_{J_u\cap\{ u < t\}^0}(\underline{u}^2+\overline{u}^2)d\Ls+\int_{J_u\cap\partial^*\{ u>t\}}\underline{u}^2d\Ls+\int_{\partial^*\{ u>t\}\setminus J_u}u^2d\Ls\\
\leq C\Big[Q\left(u1_{\{u\leq t\} }\right)+|\{0<u\leq t\}|+t^2\text{Per}(\{ u>t\};\mathbb{R}^n\setminus J_u)\Big]
\leq C t f'(t).
\end{multline*}
 Coming back to the estimate of $f(t)$, we obtain
\[f(t)\leq C\Big[t f'(t)\Big]^{1+\frac{\epsilon}{2n}}.\]
This implies that, for all $t$ such that $f(t)>0$, we have:
\[\frac{d}{dt}\Big[f(t)^{\frac{\epsilon}{2n+\epsilon}}\Big]\geq \frac{1}{Ct}.\]
Let $t_0$ be such that $f(t_0)>0$ and $t_1:=\min(c^{\frac{1}{1-\epsilon}},\Theta^{-1}(k/2))$. We integrate on $[t_0,t_1]$
\[(f(t_1)^{\frac{\epsilon}{2n+\epsilon}}\geq) f(t_1)^{\frac{\epsilon}{2n+\epsilon}}-f(t_0)^{\frac{\epsilon}{2n+\epsilon}}\geq \frac{1}{C}\log(t_1/t_0).\]
Using our uniform bound on $f(t_1)$, we obtain a lower bound $\delta$ on $t_0$ that only depends on the parameters of the problem, meaning that $f(\delta)=0$ for an explicit $\delta>0$. We apply the optimality condition in $t=\delta$:
\[
Q\left(u1_{\{u\leq \delta\}}\right)+N^{c,\epsilon}\left(u1_{\{u\leq \delta\}}\right)+\frac{k}{2}|\{ 0<u\leq \delta\}|\leq 0,\]
and so $|\{ 0<u\leq \delta\}|=0$.

\end{proof}

\subsection{Closedness of the jump set}
 We prove below that the support of the minimizer $u$ is an open set $\Omega$ with finite perimeter. For this we prove that the jump set of $u$ is closed as $J_u$ is identified with $\partial \Om$. 
\begin{lemma}\label{bounded}
Let $u\in {\mathcal U}_{m}$ be a minimizer of the functional $\Ece$. Then $u\in SBV(\RR^n)\cap L^\infty(\RR^n)$ and $\Ls(J_u)<\infty$. Moreover, $\Ls(\overline{J_u}\setminus J_u)=0$.
\end{lemma}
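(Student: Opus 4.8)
The plan is to establish the three assertions of Lemma~\ref{bounded} in sequence, using the nondegeneracy Lemma~\ref{lemmaCaff} as the main tool. First I would prove $u\in L^\infty(\RR^n)$. The natural approach is a truncation argument: for $t>0$ compare $u$ with $u\wedge t = \min(u,t)$. Since $\{u\wedge t>0\}=\{u>0\}$ has the same measure $m$, this is an admissible competitor in $\Um$, so $\Ece(u)\le\Ece(u\wedge t)$. Replacing $u$ by $u\wedge t$ does not increase $Q$ (the Dirichlet integral drops and the jump term on $J_u\cap\{u\le t\}$ is unchanged while on $J_u\cap\{\underline u<t<\overline u\}$ the value $\overline u$ is replaced by $t$), does not increase $N^{c,\epsilon}$, and strictly increases $-\int\Theta(u)$ unless $|\{u>t\}|=0$; quantitatively, $\int_{\{u>t\}}(\Theta(u)-\Theta(t))\,d\Ln\le 0$. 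Combined with the a priori bound $\|\Theta(u)\|_{L^1}<C$ from Lemma~\ref{ResultControl} and the superlinear growth of $\Theta$ near infinity (since $\Theta(c+s)\sim s^q/q$), this forces $|\{u>t\}|$ to decay, and with a standard iteration (à la De Giorgi/Stampacchia) one concludes $u\le M$ for an explicit $M$ depending on the parameters and $\Ece(u)$. Granting $u\in L^\infty$, the bound $\Ls(J_u)<\infty$ is immediate: on $J_u$ one has $\underline u\ge\delta$ by Lemma~\ref{lemmaCaff}, so $\delta^2\Ls(J_u)\le\int_{J_u}(\underline u^2+\overline u^2)\,d\Ls\le \tfrac{2}{\beta}Q(u)<\infty$ by Lemma~\ref{ResultControl}, and since $u^2\in SBV$ with bounded jump set and bounded variation we get $u\in SBV(\RR^n)$.

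The main work is the density estimate $\Ls(\overline{J_u}\setminus J_u)=0$, which is the heart of the lemma. The approach is the classical one for free discontinuity problems (Mumford--Shah / Ambrosio--Fusco--Pallara, adapted as in \cite{Buc14,Buc15,BFNT18}): prove a lower density bound for the jump set, namely that there exist $\theta_0>0$ and $r_0>0$ such that
\[
\Ls(J_u\cap B_r(x_0))\ge \theta_0 r^{n-1}\qquad\text{for all } x_0\in J_u,\ 0<r<r_0.
\]
Once this is known, the set $\overline{J_u}\setminus J_u$ has $\Ls$-measure zero by a Vitali-type covering argument together with the fact that $\Ls\lfloor J_u$ is a Radon measure: any point of $\overline{J_u}$ has the density bound, but $\Ls(\overline{J_u})$ is finite, so the ``bad'' set cannot carry positive measure — more precisely, $\Ls$ restricted to $\overline{J_u}$ is then an upper-Ahlfors-regular measure whose support is $\overline{J_u}$, which combined with $\Ls(J_u)<\infty$ forces $\Ls(\overline{J_u}\setminus J_u)=0$. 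To prove the lower density bound one argues by contradiction via a decay/comparison scheme: if the jump set has very small measure in some ball $B_r(x_0)$, one removes it — replacing $u$ in $B_r(x_0)$ by a Sobolev function (e.g. the harmonic extension of the trace, or simply an averaged extension) with no jump inside, paying a Dirichlet-energy and perimeter cost of order $r^{n-1}$ times the smallness, while the gain from eliminating $\int_{J_u\cap B_r}(\underline u^2+\overline u^2)\,d\Ls\ge \delta^2\Ls(J_u\cap B_r)$ (using nondegeneracy, $\underline u\ge\delta$) beats the cost if $\Ls(J_u\cap B_r(x_0))$ is below a threshold multiple of $r^{n-1}$; this contradicts $x_0\in J_u$ (which guarantees $\Ls(J_u\cap B_r(x_0))>0$ for all small $r$, and in fact after the comparison one deduces it must actually vanish). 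Some care is needed because the functional is degenerate (perimeter with negative sign) and contains the obstacle term, but the negative perimeter term only helps when one removes the jump, and the terms $N^{c,\epsilon}$ and $\int\Theta$ are lower-order and controlled by $\|u\|_\infty$ and the smallness of the modified region.

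I expect the density estimate (the last assertion) to be the main obstacle, for two reasons. First, constructing the competitor that cancels the jump set inside a small ball while keeping the function in $SBV^{1/2}$ and controlling all four energy contributions requires the standard but delicate ``filling in the hole'' construction, and here one must simultaneously handle the trace term $\tfrac{\beta}{2}\int_{J_v}(\underline v^2+\overline v^2)\,d\Ls$, its $\epsilon$-perturbation $N^{c,\epsilon}$, and the volume constraint $|\{v>0\}|=m$ — the latter handled as usual by rescaling and absorbing the error using the penalized functional $\Ecek$ via Lemma~\ref{ResultMeasurePenalization}. Second, the negative sign of the perimeter in $\Ec$ means one cannot naively quote existing Mumford--Shah regularity; one must check that the cancellation argument is robust, i.e. that the energy released by deleting the jump (bounded below using nondegeneracy) genuinely dominates the Dirichlet cost of the extension, for which the key input is precisely Lemma~\ref{lemmaCaff} giving $\underline u\ge\delta>0$ on $J_u$. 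The $L^\infty$ bound and the finiteness of $\Ls(J_u)$, by contrast, should be routine truncation and nondegeneracy arguments respectively.
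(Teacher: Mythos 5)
Your Steps~2 and~3 match the paper, but Steps~1 and~4 each contain a genuine gap. In Step~1, the displayed claim $\int_{\{u>t\}}(\Theta(u)-\Theta(t))\,d\Ln\le 0$ has the wrong sign: $\Theta$ is increasing, so this integral is nonnegative. The correct use of optimality against $u\wedge t$ is $Q((u-t)_+)\le\int_{\{u>t\}}(\Theta(u)-\Theta(t))$. More importantly, the ``standard iteration'' you invoke does not close on its own: the Chebyshev decay of $|\{u>t\}|$ coming from $\|\Theta(u)\|_{L^1}<\infty$ is true of every $L^q$ function and cannot by itself give an $L^\infty$ bound. The ingredient you omit, and which the paper makes essential use of, is the Poincar\'e--Sobolev inequality \eqref{bgn16} with exponent $2$ applied to the truncations $u_M=(u-M)_+$, producing the estimate $\lambda_{2,\beta}(B^{\alpha(M)})\int u_M^2\le Q(u_M)$, where the constant $\lambda_{2,\beta}(B^{\alpha(M)})\sim\alpha(M)^{-1/n}$ improves as $\alpha(M)=|\{u>M\}|\to 0$; this is then combined with the algebraic inequality $b^q-a^q\le b^2-a^2$ ($1\le a\le b$, $q<2$) to dominate the $\Theta$-difference by a quadratic, yielding a differential inequality in $f(M)=\int u_M$ whose integrability forces a contradiction if $u$ is unbounded. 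Without the $q=2$ Poincar\'e input and the crucial mass-dependence of its constant, your sketch does not establish $u\in L^\infty$.

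In Step~4 you propose to prove the lower density bound for $J_u$ directly by deleting the jump inside a small ball and estimating the cost of a Sobolev replacement. This is a genuinely different route from the paper: the paper instead shows that $u$ is a local quasi-almost minimizer of the Mumford--Shah functional and invokes the known essential-closedness result of \cite[Theorem~3.1]{Buc14}. The derivation of quasi-almost minimality in the paper is routine once $\delta\le u\le M$ on $\{u>0\}$ is known: one compares with a truncated competitor $v'=v\wedge M$, applies Lemma~\ref{ResultMeasurePenalization} (with some care about which case applies), and the two-sided bound on $u$ converts every jump term into $\Ls$ of the jump set up to controlled multiplicative constants, together with a volume error of size $O(r^n)$. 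Your sketch, by contrast, would need to re-prove (a version of) the De~Giorgi--Carriero--Leaci density theorem from scratch, and as written it will not go through: one cannot simply ``remove the jump set'' and pay only a small Dirichlet cost, because the function takes different values on either side of $J_u$ and the trace on $\partial B_r$ may itself be discontinuous. The genuine argument is an energy-decay/stopping-time dichotomy (either the jump measure is a definite fraction of $r^{n-1}$ or the Dirichlet energy decays) iterated over scales, which is much subtler than your description. The paper's reduction to quasi-almost minimality short-circuits all of this; you should adopt that route.
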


\begin{proof}
We divide the proof in several steps.
\vskip10pt\noindent{\bf Step 1.} We prove  that $u\in L^\infty(\RR^n)$. Let us set 
$$
u_M=(u-M)_+,\qquad f(M)=\int_{\mathbb{R}^n}u_Md\Ln\qquad\text{and}\qquad \alpha(M)=|\{ u_M>0\} |,
$$ 
 and  let $\lambda_{2,\beta}(B^m)$ be the best constant in \eqref{bgn16} with $q=2$ and $\beta$ as  boundary parameter, among sets with mass $m$. We suppose that $u$ is not bounded, and so that $f(M)>0$ and $\alpha(M)>0$ for all $M>0$. Let 
$$
g(M)=\frac{ \lambda_{2,\alpha(M)^{1/n}\beta} (B^1)}{\alpha(M)^{1/n}}.
$$ 
The results we will use here is that
\[
 \lambda_{2,\beta}(B^{\alpha(M)})=\alpha(M)^{-\frac{1}{n}}g(M)\text{ with }\liminf_{M\rightarrow \infty}g(M)>0.
\]
We test the optimality of the function $u$ against the function $u\wedge M$. We get
\begin{multline*}
\frac{1}{2}\int_{\{ u>M\}}|\nabla u|^2d\Ln
 +\frac{\beta}{2}\int_{\{ M\leq \underline{u}<\overline{u}\}}(\overline{u}^2+2c\overline{u}^{1+\epsilon}+\underline{u}^2+2c\underline{u}^{1+\epsilon})d\Ls\\
 +\frac{\beta}{2}\int_{\{ \underline{u}<M\leq\overline{u}\}}(\overline{u}^2+2c\overline{u}^{1+\epsilon}-M^2-2cM^{1+\epsilon})d\Ls
\leq \int_{\{ u>M\} }(\Theta(u)-\Theta(M))d\Ln.
\end{multline*}
This implies:
\begin{equation}\label{1eq}
Q(u_M)\leq \int_{\{ u>M\} }(\Theta(u)-\Theta(M))d\Ln
\end{equation}
and \eqref{bgn16} gives
\begin{equation}\label{2eq}
g(M)\alpha(M)^{-\frac{1}{n}}\int_{\mathbb{R}^n}u_M^2d\Ln\leq Q(u_M).
\end{equation}
Using $1\leq q<2$, we know that for all $1\leq a\leq b$, we have $b^q-a^q\leq b^2-a^2$, which implies, for $M\geq 1$, that
\begin{align*}
\int_{\{ u>M\} }(\Theta(u)-\Theta(M))d\Ln&\leq \frac{1}{q}\int_{\{ u>M\} }((c+M+u_M)^2-(c+M)^2)d\Ln\\
&= \frac{1}{q}\int_{\{ u>M\} }(u_M^2+2(c+M)u_M)d\Ln.
\end{align*}
Combining this with the estimates \eqref{1eq},\eqref{2eq}, we get:
\[\left(g(M)\alpha(M)^{-\frac{1}{n}}-\frac{1}{q}\right)\int_{\mathbb{R}^n}u_M^2d\Ln\leq \frac{2}{q}(c+M)f(M).\]
Since $\liminf_{M\rightarrow\infty}g(M)>0$ and $\alpha(M)\underset{M\rightarrow \infty}{\longrightarrow} 0$, we know that for all big enough $M$, 
\[g(M)\alpha(M)^{-\frac{1}{n}}-\frac{1}{q}\geq \frac{1}{2}g(M)\alpha(M)^{-\frac{1}{n}}.\]
Holder's inequality gives:
\[\alpha(M)^{-1}f(M)^2\leq\int_{\mathbb{R}^n}u_M^2d\Ln.\]
Thus we get:
\[\frac{1}{2}g(M)\alpha(M)^{-\frac{n+1}{n}}f(M)^2\leq \frac{2}{q}(c+M)f(M), \]
which can be rewritten as
\[\left(\frac{q}{4}\frac{g(M)}{c+M}\right)^{\frac{n}{n+1}}\leq \alpha(M)f(M)^{-\frac{n}{n+1}}.\]
The left side is not integrable because $\liminf g>0$. Since $f(M)\rightarrow 0$ and $f'(M)=-\alpha(M)$, the right side is integrable (its integral on $[M_0,+\infty[$ is $\frac{1}{n+1}f(M_0)^{\frac{1}{n+1}}<\infty$): this is a contradiction. We deduce that $u$ is bounded by a certain constant $M>0$.

\vskip10pt
\noindent{\bf Step 2.} We get $\Ls(J_u)<\infty$. Indeed, by Lemma \ref{lemmaCaff} we have  $u>\delta 1_{\{u>0\}}$ which implies
\[\delta^2 \Ls(J_u)\leq \int_{J_u}(\overline{u}^2+\underline{u}^2)d\Ls<\infty.\]
\medskip

\vskip10pt
\noindent{\bf Step 3.} The function $u$ belongs to $ SBV(\RR^n)$. The proof follows the same arguments as in \cite{Buc15}, by considering the function $u^\eta=\sqrt{u^2+\eta^2}$ for $\eta\rightarrow 0$.

\vskip10pt
\noindent{\bf Step 4.} We show the closedness of the jump set $\Ls(\overline{J_u}\setminus J_u)=0$. Following \cite{Buc14}, we only need to show that $u$ is a local almost quasi-minimizer of the Mumford-Shah functional. We use $\delta 1_{\{u>0\}}\leq u\leq M$ to prove this.

Indeed, consider $v\in SBV(\RR^n)$  such that $\{ u\neq v\}\Subset B_r$ for a ball $B_r$ of radius $r>0$ small enough. Let $v'=v\wedge M$. Since $u\leq M$, then $\{ u\neq v'\}\Subset B_r$ and $v'$ still belongs to $SBV$.
Applying Lemma \ref{ResultMeasurePenalization}, either $|\{ v'>0\}|\leq m$ and we are in the first case  or $|\{ v'>0\}|\geq m$ and we are in the second one. In the second case we need to verify that $\Vert \Theta(v')\Vert_{L^1}\leq 2 \Vert \Theta(u)\Vert_{L^1}$ ; this is true for small enough $r$ since $\Vert \Theta(v')\Vert_{L^1}\leq \Vert \Theta(u)\Vert_{L^1}+|B_r|\Theta(M)$. Thus for a small enough $r$ there exists $k>0$ depending only on the parameters such that 
\[\Ecek(u)\leq\Ecek(v').\]
This can be rewritten
\begin{align*}
\frac{1}{2}&\int_{B_r}|\nabla u|^2d\Ln-\int_{B_r}\Theta(u)d\Ln\\
&+\frac{\beta}{2}\int_{J_u\cap B_r}\left[(\underline{u}^2+2c\underline{u}^{1+\epsilon}) +(\overline{u}^2+2c\overline{u}^{1+\epsilon})\right]\,d\Ls+k|\{ u>0\}\cap B_r|\\
\leq\frac{1}{2}&\int_{B_r}|\nabla v'|^2d\Ln-\int_{B_r}\Theta(v')d\Ln\\
&+\frac{\beta}{2}\int_{J_{v'}\cap B_r}\left[(\underline{v'}^2+2c\underline{v'}^{1+\epsilon}) +(\overline{v'}^2+2c\overline{v'}^{1+\epsilon})\right]\,d\Ls+k|\{ v'>0\}\cap B_r|.
\end{align*}
Using $u>\delta 1_{\{u>0\}}$ and $v'\leq M$, as well as $|\nabla v'|\leq |\nabla v|$, $J_{v'}\subset J_v$, we get
\begin{multline*}
\int_{B_r}|\nabla u|^2d\Ln+\beta(\delta^2+2c\delta^{1+\epsilon})\Ls(J_u\cap B_r)\\
\leq \int_{B_r}|\nabla v|^2d\Ln+2\beta (M^2+2cM^{1+\epsilon})\Ls(J_v\cap B_r)+2\alpha_n \left(k+\Theta(M)\right) r^n,
\end{multline*}
where $\alpha_n=|B_1|$. Up to a renormalization of $u$, this is exactly the definition of a local quasi-almost minimizer. Following \cite[Theorem 3.1]{Buc14},  this implies
\[\Ls(\overline{J_u}\setminus J_u)=0.\]
\end{proof}

\begin{lemma}
\label{lem:support}
Let $u\in {\mathcal U}_{m}$ be a minimizer of the functional $\Ece$, then there exists an open domain $\Omega$ such that the following items hold true.
\begin{itemize}[label=\textbullet]
\item[(a)] $\partial\Omega=\overline{J_u}$, $\Ls(\partial\Omega\setminus J_u)=0$ and $u=0$ a.e. on $\mathbb{R}^n\setminus\Omega$
\item[(b)] $u_{|\Omega} \in H^1(\Omega)$ and verifies $\delta<u_{|\Omega}<M$ for certain constants $\delta,M>0$ and 
\[-\Delta u=(c+u)^{q-1}\text{ in }{\mathcal D}'(\Omega).\]

\end{itemize}
In particular, $u$ is analytic on its support.
\end{lemma}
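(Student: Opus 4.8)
The plan is to establish Lemma \ref{lem:support} by collecting the regularity results already obtained and upgrading them to the Euler--Lagrange equation via inner variations. We already know from Lemma \ref{bounded} that $u\in SBV(\RR^n)\cap L^\infty(\RR^n)$, that $\Ls(J_u)<\infty$ and, crucially, that $\Ls(\overline{J_u}\setminus J_u)=0$; and from Lemma \ref{lemmaCaff} that $u\geq\delta 1_{\{u>0\}}$ for some $\delta>0$.

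First I would \emph{define} $\Omega:=\RR^n\setminus\overline{J_u}$, an open set, and argue that this is the right candidate. Since $\Ls(\overline{J_u}\setminus J_u)=0$, the reduced boundary $\partial^*\{u>0\}$ agrees $\Ls$-a.e.\ with $J_u$, and using the nondegeneracy $u\geq\delta 1_{\{u>0\}}$ one sees that $u$ cannot have an approximate continuity point with positive value outside $\Omega$; hence $u=0$ a.e.\ on $\RR^n\setminus\Omega$ and $\partial\Omega=\overline{J_u}$. The identity $\Ls(\partial\Omega\setminus J_u)=0$ is then immediate. This gives item (a). For item (b), on the open set $\Omega$ the function $u$ has no jump part, so $u_{|\Omega}\in H^1_{loc}(\Omega)$; combined with the global bounds $\delta\leq u\leq M$ on $\{u>0\}$ and $\Ls(J_u)<\infty$ (so $\Omega$ has finite perimeter and $u_{|\Omega}\in H^1(\Omega)$), we get the stated two-sided bound, with the strict inequalities coming afterwards from the strong maximum principle and analyticity once the PDE is known.

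The core of the proof is the Euler--Lagrange equation $-\Delta u=(c+u)^{q-1}$ in ${\mathcal D}'(\Omega)$. Here I would use outer variations that do \emph{not} touch the jump set: for $\phi\in C_c^\infty(\Omega)$ and $|s|$ small, the competitor $u+s\phi$ still has $|\{u+s\phi>0\}|=m$ (since $u\geq\delta>0$ on a neighbourhood of $\operatorname{supp}\phi$, the support is unchanged) and the same jump set $J_u$ with the same traces, so $\Ece(u+s\phi)=\Ece(u)+s\bigl(\int_\Omega\nabla u\cdot\nabla\phi-\int_\Omega(c+u)^{q-1}\phi\bigr)+o(s)$; the jump and $N^{c,\epsilon}$ terms are untouched. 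Differentiating at $s=0$ yields the weak formulation. Note $q-1\in[0,1)$ so $(c+u)^{q-1}$ is bounded and the variation is legitimate. Elliptic regularity (the right-hand side is a real-analytic function of $u$, which is continuous) then bootstraps $u_{|\Omega}$ to being analytic in $\Omega$, whence the strict inequalities $\delta<u<M$ inside $\Omega$ by the strong maximum/minimum principle (if $u$ attained $\delta$ at an interior point it would be constant $\equiv\delta$, contradicting the PDE since $-\Delta\delta=0\neq(c+\delta)^{q-1}>0$; similarly at $M$).

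The main obstacle is the justification that no component of $\{u>0\}$ is "too small'' or pinched off in a way that spoils the identification $\partial\Omega=\overline{J_u}$ — i.e.\ making rigorous that $\Omega$ is genuinely open with topological boundary exactly $\overline{J_u}$ and that $u$ vanishes a.e.\ outside. This is where one must use the nondegeneracy of Lemma \ref{lemmaCaff} together with the measure-theoretic structure of sets of finite perimeter: the level-set slicing already performed in the proof of Lemma \ref{lemmaCaff} shows there is no "thin'' zone $0<u\leq\delta$, so $\{u>0\}$ is, up to a null set, the open set $\Omega$, and $\overline{J_u}$ separates $\Omega$ from $\{u=0\}$. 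Once this topological clean-up is in place, the remaining steps (outer variation, elliptic regularity, analyticity) are standard.
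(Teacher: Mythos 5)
The paper's own ``proof'' of this lemma is a one-line citation to Theorem~6.15 of \cite{Buc15}, so you are genuinely reconstructing the argument. Your overall plan---combine Lemmas~\ref{lemmaCaff} and~\ref{bounded}, take $\Omega$ to be an open set determined by $\overline{J_u}$, derive the Euler--Lagrange equation by variations $u+s\phi$ with $\phi\in C_c^\infty(\Omega)$, and bootstrap to analyticity---is the right shape. However, there is a real gap in the first and most structural step: your choice of $\Omega$.

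You set $\Omega:=\RR^n\setminus\overline{J_u}$. Since $\Ls(\overline{J_u})<\infty$, this $\Omega$ has full Lebesgue measure, so the assertion ``$u=0$ a.e.\ on $\RR^n\setminus\Omega$'' that you deduce is vacuous (it concerns only the Lebesgue-null set $\overline{J_u}$), and it is \emph{not} the content of item~(a). Worse, this $\Omega$ necessarily contains entire connected components on which $u\equiv 0$ (for instance the unbounded component, because $|\{u>0\}|=m<\infty$), so the two-sided bound $\delta<u_{|\Omega}<M$ of item~(b) is false for your $\Omega$, and on these components $-\Delta u=0\ne(c+u)^{q-1}$ whenever $c>0$. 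Put together with item~(a) as you phrased it, your two claims would even imply $u>\delta$ a.e.\ on $\RR^n$, which contradicts $u\in L^2$ with bounded support. The correct $\Omega$ must be the union of those connected components $V$ of $\RR^n\setminus\overline{J_u}$ on which $u$ is not a.e.\ zero; on each such $V$, $u_{|V}\in H^1_{\rm loc}(V)$ has no jump and takes values in $\{0\}\cup[\delta,+\infty)$, so by connectedness $u\geq\delta$ a.e.\ on $V$. With that definition one still has $\partial\Omega\subseteq\overline{J_u}$ (as $\Omega$ is a union of components of $\RR^n\setminus\overline{J_u}$), and the reverse inclusion follows because each $x\in J_u$ has $\overline{u}(x)\geq\delta$, hence arbitrarily small balls around $x$ meet $\{u>0\}\subseteq\Omega$ in positive measure while $x\notin\Omega$. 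Only then does item~(a) carry its intended meaning and item~(b) hold. The remainder of your argument---the inner/outer variation $u\mapsto u+s\phi$ with $\phi\in C_c^\infty(\Omega)$, which leaves $J_u$, its traces, and $|\{u>0\}|$ untouched because $u\geq\delta$ near $\operatorname{supp}\phi$; the resulting weak equation $-\Delta u=\Theta'(u)=(c+u)^{q-1}$; elliptic bootstrap and analyticity since $c+u\geq\delta>0$---is sound once $\Omega$ is corrected, and is essentially the standard route.
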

\begin{proof}
The proof is the same as Theorem 6.15 of \cite{Buc15}.
\end{proof}
\medskip

\subsection{The optimal function is radially symmetric} Now we prove that the optimal function is radial and is supported on a ball of measure $m$. 
\begin{lemma}
Let $u\in {\mathcal U}_{m}$ be a minimizer of the functional $\Ece$, then $u$ is radial and its support is a ball.
\end{lemma}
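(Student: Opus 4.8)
The plan is to use rearrangement/symmetrization, specifically a Schwarz-symmetrization-type argument, exploiting the fact that by Lemma~\ref{lem:support} the minimizer $u$ is supported on an open set $\Omega$ of finite perimeter with $\partial\Omega=\overline{J_u}$, satisfies $\delta\le u\le M$ there, and is a genuine $H^1(\Omega)$ solution of the semilinear PDE. The key point is that each of the three terms in $\Ece$ behaves monotonically under a suitable symmetrization that replaces $\Omega$ by a ball and $u$ by its decreasing rearrangement. First I would recall that, since $u$ has no jump inside $\Omega$, the jump set $J_u$ coincides $\Ls$-a.e.\ with $\partial^*\Omega$ and on it $\underline u=0$, $\overline u=$ the trace of $u$; hence the boundary term is $\frac{\beta}{2}\int_{\partial^*\Omega}(\tau_u^2+2c\tau_u^{1+\epsilon})\,d\Ls$ where $\tau_u$ is the (Sobolev) trace. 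Replacing $\Omega$ by the ball $B=B^m$ of the same measure and $u$ by the Schwarz rearrangement $u^*$ (which lies in $H^1(B)$), the P\'olya--Szeg\H{o} inequality controls the Dirichlet energy, $\int_B|\nabla u^*|^2\le\int_\Omega|\nabla u|^2$, while the bulk term $\int\Theta(u)$ is unchanged because it is a rearrangement-invariant integral of $u$. The subtlety is the boundary term: one needs a rearrangement statement for the trace term, i.e.\ that $\int_{\partial B}\phi(u^*)\,d\Ls\le\int_{\partial^*\Omega}\phi(\tau_u)\,d\Ls$ for the relevant convex increasing $\phi$; this is exactly the content of the Bossel--Daners circle of ideas and is also available via the ``generalized P\'olya--Szeg\H{o} with boundary term'' used in \cite{Buc15} (indeed the optimality of the constant in \eqref{bgn16} rests on such an inequality).

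Concretely, I would argue as follows. Let $u$ be the minimizer with support $\Omega$. Consider the level sets $\Omega_t=\{u>t\}$ for $t\in(0,M)$; each has finite perimeter and $\Omega_0=\Omega$. Define $u^*$ to be the radially symmetric decreasing function whose level sets $\{u^*>t\}$ are the balls $B_t$ centered at the origin with $|B_t|=|\Omega_t|$; in particular $\{u^*>0\}=B^m$. By the coarea formula and P\'olya--Szeg\H{o}, $\int_{\RR^n}|\nabla u^*|^2\le\int_{\RR^n}|\nabla u|^2$. Since $u^*$ is equimeasurable with $u$, $\int\Theta(u^*)=\int\Theta(u)$. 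For the boundary energy, I would invoke the symmetrization inequality for trace terms: because $t\mapsto\Ls(\partial^* B_t)\le\Ls(\partial^*\Omega_t)$ by the isoperimetric inequality (applied at the single level $t=0$ gives $\Per(B^m)\le\Per(\Omega)$, but one needs the trace statement, which follows from Fleming--Rishel plus isoperimetry applied to the full family, as in \cite{Buc15,Bo86,Da06}), one gets $\int_{\partial^* B^m}(\tau_{u^*}^2+2c\,\tau_{u^*}^{1+\epsilon})\,d\Ls\le\int_{\partial^*\Omega}(\tau_u^2+2c\,\tau_u^{1+\epsilon})\,d\Ls$; note the integrand $s\mapsto s^2+2cs^{1+\epsilon}$ is increasing on $[0,\infty)$ for $c\ge0$, which is what the rearrangement of trace terms requires. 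Combining, $\Ece(u^*)\le\Ece(u)$, and since $|\{u^*>0\}|=m$, $u^*\in\mathcal U_m$, so $u^*$ is also a minimizer.

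It then remains to upgrade ``some minimizer is radial'' to ``every minimizer is radial'' — which is what the statement asserts — by examining the equality cases. Here I would use the analyticity from Lemma~\ref{lem:support} together with the strict positivity $\delta\le u\le M$: equality in P\'olya--Szeg\H{o} for a function with no vanishing-gradient plateaus (on $\{0<u<M\}$, $\nabla u\ne0$ a.e.\ since $u$ is analytic and non-constant) forces all the level sets $\Omega_t$ to be balls, hence translates of a fixed family of concentric balls, hence $u$ is itself radial about a point; and then $\Omega=\Omega_0$ is a ball of measure $m$. One point needing care is whether $\nabla u=0$ on a positive-measure set inside $\Omega$: by the PDE $-\Delta u=(c+u)^{q-1}>0$ and analyticity, $u$ has no interior local maxima and its critical set has measure zero, so the equality-case analysis of Brothers--Ziemer applies and yields the full radial symmetry.

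The main obstacle I anticipate is the trace-term symmetrization inequality together with its equality case — i.e.\ making precise that the boundary integral $\int_{\partial^*\Omega}\phi(\tau_u)\,d\Ls$ decreases under Schwarz rearrangement with equality only for balls. This is where the genuine input from \cite{Buc15} (and ultimately the Bossel--Daners philosophy, even though no $H$-function is used) is needed; the Dirichlet and bulk terms are routine. I would therefore expect the proof in the paper to cite the relevant symmetrization lemma from \cite{Buc15} (the one underlying the optimality of $\lambda_q(B^m)$ in \eqref{bgn16}) rather than reprove it, and to spend its effort on the equality case, exactly as sketched above.
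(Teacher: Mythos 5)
Your proposed route through Schwarz rearrangement breaks down at the step you label ``routine.'' The P\'olya--Szeg\H{o} inequality, in the form $\int|\nabla u^*|^2\,d\Ln\le\int|\nabla u|^2\,d\Ln$ applied to the absolutely continuous part of the gradient, is simply \emph{false} for functions of the type considered here, i.e.\ $u\in SBV^{1/2}$ bounded below by $\delta>0$ on an open set $\Om$ and jumping to $0$ across $\partial\Om$. The classical inequality is a statement for $W^{1,p}(\RR^n)$ (or $W^{1,p}_0$) functions; once a jump is permitted, the Dirichlet energy can \emph{increase} under decreasing rearrangement, and dramatically so. Take for instance two balls joined by a thin neck of width $\eta$, with $u\equiv 1$ on one lobe, $u\equiv 2$ on the other, interpolating along the neck: then $\int_\Om|\nabla u|^2=O(\eta)\to 0$ while the rearranged $u^*$ must drop from $2$ to $1$ across a spherical shell of radial thickness $O(\eta)$, giving $\int|\nabla u^*|^2=O(1/\eta)\to\infty$. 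This is not a pathology the other terms can absorb: the $\Theta$ integral is unchanged, and the boundary term does decrease (indeed more easily than you suggest, since $\tau_u\ge\delta$ a.e.\ on $\partial^*\Om$ and $\phi$ is increasing, so $\int_{\partial^*\Om}\phi(\tau_u)\,d\Ls\ge\phi(\delta)\Per(\Om)\ge\phi(\delta)\Per(B^m)$), but by a bounded amount, so $\Ece(u^*)\le\Ece(u)$ is lost. For the same reason, the attribution to \cite{Buc15} is misplaced: the inequality \eqref{bgn16} is \emph{not} obtained there by separately symmetrizing the Dirichlet and trace terms --- precisely because the former does not symmetrize --- but by a free-discontinuity/approximation argument. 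This obstruction is exactly why the Robin Faber--Krahn inequality resisted the standard Dirichlet-type proof and required the Bossel--Daners $H$-function or the $SBV^{1/2}$-relaxation strategy.

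The paper's proof sidesteps global rearrangement entirely. It first shows $\Omega$ is connected by a rescaling argument. It then obtains \emph{local} radiality of $u$ by iterated hyperplane reflections (Steiner-type): reflecting across a hyperplane that halves the support measure produces competitors whose energies sum to at most $2\Ece(u)$, hence are themselves minimizers; matching $C^1$ pieces across the hyperplane forces $\nabla u$ to be normal to it, and this for every direction, so $u$ is a radial function restricted to $\Omega$. Crucially, at no point must one prove that a rearrangement decreases the Dirichlet term --- reflections yield \emph{competitors}, not inequalities, and equality of energies is essentially automatic. Radiality of the \emph{support} $\Omega$ is then deduced by a spherical-cap symmetrization argument combined with the observation that two distinct centers of local radiality would force $\nabla u\equiv 0$; and finally annuli are excluded by a first-variation/ODE (Pohozaev-type) identity. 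So the paper's route is a local, PDE-and-analyticity-driven argument, whereas yours would require a global $SBV$ rearrangement principle that does not hold. You would need to replace the P\'olya--Szeg\H{o} step by a mechanism of the reflection type (or an $H$-function analogue) to make a symmetrization proof work here.
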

\begin{proof}
Using Lemma \ref{lem:support}, we know that $u$ is an analytic function on an open domain $\Omega$ with finite perimeter. We divide the proof in several steps.

\medskip
\noindent {\bf Step 1.} {\bf The set $\Om$ is connected.}
We first show that $\Omega$ is connected: suppose that $\Omega=V\sqcup W$ for two open sets $V,W$, then we write $v=u1_V$ and $w=u 1_W$: these functions are in $\mathcal{U}_{|V|}$ and $\mathcal{U}_{|W|}$. We also set
$$
\widetilde{v}(x):=v\left(\left[\frac{|V|}{m}\right]^{1/n}x\right)\qquad\text{and}\qquad
\widetilde{w}(x):=w\left(\left[\frac{|W|}{m}\right]^{1/n}x\right).
$$
The functions $\widetilde{v}$ and $\widetilde{w}$ are in $\Um$ so, by comparison to $u$ and Lemma \ref{bgn17},

\[\Ece(u)\leq \frac{|V|}{m}\Ece(\widetilde{v})+\frac{|W|}{m}\Ece(\widetilde{w})< \Ece(v)+\Ece(w)=\Ece(u).\]
 This is a contradiction which implies that $\Omega$ is connected.\bigbreak

\medskip
\noindent {\bf Step 2.}  {\bf The function $\mathbf{u}$ is locally radial.} Here we call {\it locally radial} any function $u$ that is locally the restriction of a radial function on a support that is not necessarily radial; when $u$ is smooth on its support, which is the case here, it is equivalent, up to a translation, to $\nabla u(x)$ being proportional to $x$ for every $x\in\Omega$.\bigbreak

We first build a symmetrized version of $\Omega$ called $\Omega ^s$ such that any hyperplane going through the origin cuts $\Omega^s$ in two parts of same volume. The procedure is the following: take $\lambda_1\in\mathbb{R}$ such that the hyperplane $\{ x_1=\lambda_1\}$ cuts the support of $u$ in two parts of equal volume. Let $S_1$ be the symmetry across this hyperplane, and
\[u_1^+=\begin{cases}u & \text{ in }\{ x_1> \lambda_1\} \\ u\circ S_1 & \text{ in }\{ x_1 <\lambda_1\}\end{cases}\]
\[u_1^-=\begin{cases}u & \text{ in }\{ x_1< \lambda_1\} \\ u\circ S_1 & \text{ in }\{ x_1 >\lambda_1\}\end{cases}\]
Then $u_1^\pm$ both have a support of volume $m$ and $\Ece(u_1^+)+\Ece(u_1^-)\leq 2\Ece(u)$ (the inequality is only there because some part of $J_u$ lying on $\{ x_1=\lambda_1\}$ might be deleted in $u_1^\pm$: when this does not happen, there is equality). In particular $u_1^+$ and $u_1^-$ are also minimizers of $\Ece$ in $\Um$. Now apply the same procedure to $u_1^+$ across an hyperplane $\{ x_2=\lambda_2\}$, and so on: we get in the end a minimizer $u^s\in \Um$ that coincides with $u$ on the quadrant $\{ x:x_i\geq\lambda_i,\ \forall i=1,\hdots,n\}$, and that is symmetric relative to every hyperplane $\{ x_i=\lambda_i\}$.\bigbreak

Without loss of generality, we can suppose that every $\lambda_i$ is $0$. Let $\Omega^s$ be the support of $u^s$, we know that it is symmetric relative to every $\{ x_i=0\}$. The composition of all those symmetries is the central symmetry relative to the origin, and so $\Omega^s$ has a central symmetry. In particular, any hyperplane that goes through the origin cuts the volume of $\Omega^s$ in half.
\bigbreak

We now show that $u^s$ is locally radial: this implies that $u$ is locally radial because $u=u^s$ on $\Omega\cap\Omega^s(\neq\emptyset)$, $u$ and $u^s$ are analytic, and $\Omega$ is connected, so we can use analytic continuation. To show that $u^s$ is locally radial, take $\Pi$ any hyperplane going through $0$, and $S$ the symmetry across this hyperplane. Let $\Pi^+$ and $\Pi^-$ be the two half-space defined from $\Pi$, we define

\[u^{s,\pm}=\begin{cases}u^s & \text{ in }\Pi^\pm, \\ u\circ S & \text{ in }\Pi^{\mp}.\end{cases}\]

Moreover, $\Ece(u^{s,+})+\Ece(u^{s,-})\leq 2\Ece(u^s)$, so $u^{s,+}$ (and $u^{s,-}$) is a minimizer. Since it is a minimizer, it is $\mathcal{C}^1$ on its support which is an open set. $u^s$ and $u^{s,+}$ are both $\mathcal{C}^1$ and coincide on $\Pi^-\cap\Omega$: this implies that $\nabla u^s$ is purely normal to $\Pi$ on $\Pi\cap\Omega$. Since this is true for any hyperplane, we have shown that $u^s$ is locally radial, which implies that $u$ is locally radial by analyticity.\bigbreak

We know that $u$ is locally radial on a connected open set, and verifies $-\Delta u=(c+u)^{q-1}$, which means that it can be written $u(x)=\psi(|x|)1_\Omega(x)$ where $\psi$ verifies the following ordinary differential equation: 
\begin{equation}
\label{eq:ODE}
\psi''+\frac{n-1}{r}\psi'+(\psi+c)^{q-1}=0.
\end{equation}

\medskip
\noindent {\bf Step 3.}
{\bf The set $\Omega$ is radial}. To prove this we use the argument developed in \cite{Buc19}. We show that the topological boundary of $\Omega$ coincides with the measure theoretical one, up to a $\Ls$-negligible set. Since $u=\psi(|\cdot|)1_{\Omega}$ where $\psi$ is analytic and bounded below, the singular part (relative to the Lebesgue measure) of the derivative of $u$ is:
\[D^s u=\psi(|\cdot|)D^s1_{\Omega}.\]
And so its support (which is $\overline{J_u}$ up to a $\Ls$-negligible set) is the support of $D^s1_\Omega$, which is $\partial^*\Omega$ up to a $\Ls$-negligible set. Since $\partial\Omega=\overline{J_u}$, it means that $\Ls(\partial\Omega\setminus\partial^*\Omega)=0$, and so $\Omega$ has no inner boundaries. This will allow us to use symmetrization technique on $\Omega$ in the framework of sets with finite perimeter, to show that $\Omega$ itself is radial.

 Let $\widetilde{\Omega}$ be the symmetrization of $\Omega$ by spherical caps in the direction $e_1$, and $\widetilde{u}(x)=\psi(|x|)1_{\widetilde{\Omega}}(x)$. We aim to show that $\Omega$ is necessarily radial, which is implied by the property $|\widetilde{\Omega}\cap\{ x_1<0\}|=|\widetilde{\Omega}\cap\{ x_1>0\}|$. We suppose it is not the case, then there is a point $p=\lambda e_1$ for $\lambda>0$ such that all hyperplanes going through $p$ cuts $\widetilde{\Omega}$ in two parts with same volume. This is because it is the case for $n$ orthogonal hyperplanes, namely the $\{ x_i=0\}$ for $i\geq 2$ (by property of the spherical cap rearrangement), and a certain $\{ x_1=\lambda\}$ for $\lambda>0$ (because $|\widetilde{\Omega}\cap\{ x_1<0\}|<|\widetilde{\Omega}\cap\{ x_1>0\}|$).\bigbreak

Moreover, $\widetilde{u}$ is a minimizer. This is a consequence of  the properties of the spherical rearrangement and the fact that $u$ is radial with no inner jump, which implies
\begin{align*}
\Ece(\widetilde{u})&=\int_{\widetilde{\Omega}}\left(\frac{1}{2}|\psi'(|x|)|-\Theta(\psi(|x|))\right)d\Ln+\int_{\partial\widetilde{\Omega}}\left(\psi(|x|)^2+2c\psi(|x|)^{1+\epsilon}\right)d\Ls\\
&\leq\int_{\Omega}\left(\frac{1}{2}|\psi'(|x|)|-\Theta(\psi(|x|))\right)d\Ln+\int_{\partial^*\Omega}\left(\psi(|x|)^2+2c\psi(|x|)^{1+\epsilon}\right)d\Ls\\
&=\Ece(u).
\end{align*}
Where in the last line we used that $J_u=\partial^*\Omega$ up to a $\Ls$-negligible set, and either $\overline{u}$ or $\underline{u}$ is $0$ for $\Ls$-every point of $\partial\Omega$ (since there are no inner boundaries).\bigbreak

We know from the above procedure that $\widetilde{u}$ is locally radial in $0$ and in $p\neq 0$. Thus, for any point $x\in \widetilde{\Omega}\setminus (0,p)$, $\nabla u(x)$ is proportional to $x$ and $x-p$, which means that it is 0. This means that $\widetilde{u}$ is locally constant, which is not the case because of the equation $-\Delta u=(c+u)^{q-1}$ verified by $u$.\bigbreak

The consequence is that $|\widetilde{\Omega}\cap\{ x_1<0\}|=|\widetilde{\Omega}\cap\{ x_1>0\}|$ which proves that $\Omega$ is a radial set.\bigbreak

\medskip
\noindent {\bf Step 4.} {\bf The set $\Om$ is a ball.} We already know that $\Omega$ is a connected radial set. To show that $\Om$ is a ball, it suffices to exclude that it is an annulus. Suppose by contradiction $\Om=B_{r_2}\setminus \ov B_{r_1}$, where we can assume that $r_1>0$, since a point has zero capacity.


We know that $u$ is of the form $\psi(|x|)1_\Omega(x)$ where $\psi$ verifies \eqref{eq:ODE}, 
with the modified Robin boundary condition
\[ (-1)^i \psi'(r_i)+\beta\left(\psi(r_i)+c(1+\epsilon)\psi(r_i)^\epsilon\right)=0, \quad i=1,2.\]

We also know that $\psi> \delta>0$  on $\overline{\Omega}$. 
This means that $\psi$ can be extended slightly to a neighbourhood of $[r_1,r_2]$ as a solution of  \eqref{eq:ODE}, as long as $\psi>0$. We can thus extend $u$ as $\widetilde{u}(x)=\psi(|x|)$. Let:
\[g(\varrho_1,\varrho_2)=\Ece(\widetilde{u}1_{B_{\varrho_2}\setminus B_{\varrho_1}}),\]
where $\rho_1,\rho_2$ are taken close to $r_1,r_2$. We know that $g$ is minimal in $(r_1,r_2)$ among the set $\{ (\varrho_1,\varrho_2): \ \varrho_2^n-\varrho_1^n=r_2^n-r_1^n\}$. This implies:
\[\frac{\partial_{\varrho_1}g(r_1,r_2)}{r_1^{n-1}}+\frac{\partial_{\varrho_2}g(r_1,r_2)}{r_2^{n-1}}= 0.\]
This gives the following inequality, in which we shortened $\psi_i=\psi(r_i)$, $\psi_i'=\psi'(r_i)$

\begin{multline*}
\frac{1}{2}(|\psi_2'|^2-|\psi_1 '|^2)+(\Theta(\psi_1)-\Theta(\psi_2))+\frac{\beta}{2}(n-1)\left(\frac{\psi_1^2 + 2c\psi_1^{1+\epsilon}}{r_1}+\frac{\psi_2^2 + 2c\psi_2^{1+\epsilon}}{r_2}\right)\\
+\beta\left(\psi_1+c(1+\epsilon)\psi_1^\epsilon\right)\psi_1'+\beta\left(\psi_2+c(1+\epsilon)\psi_2^\epsilon\right)\psi_2 '=0.
\end{multline*}

We know that the (modified) Robin boundary condition is verified in  $r_1$ and $r_2$, which means that $\beta\left(\psi_i+c(1+\epsilon)\psi_i^\epsilon\right)=\pm\psi_i'$ ($+$ for $i=1$, $-$ for $i=2$). We rewrite the previous inequality as:
\begin{multline*}
\left(\frac{1}{2}|\psi_1'|^2+\frac{1}{q}(c+\psi_1)^{q}\right)-\left(\frac{1}{2}|\psi_2'|^2+\frac{1}{q}(c+\psi_2)^{q}\right)\\
+\frac{\beta}{2}(n-1)\left(\frac{\psi_1^2 + 2c\psi_1^{1+\epsilon}}{r_1}+\frac{\psi_2^2 + 2c\psi_2^{1+\epsilon}}{r_2}\right)=0.
\end{multline*}
In particular, this means 
$$
\frac{1}{2}|\psi_1'|^2+\frac{1}{q}(c+\psi_1)^{q}<\frac{1}{2}|\psi_2'|^2+\frac{1}{q}(c+\psi_2)^{q}.
$$
This is a contradiction because $\psi$ verifies \eqref{eq:ODE}, which implies:
\[\frac{d}{d\varrho}\left(\frac{1}{2}|\psi'|^2+\frac{1}{q}(c+\psi)^{q}\right)=-\frac{n-1}{r}|\psi'|^2\leq  0.\]
This means that a nontrivial annulus cannot be stationary for $\Ece$, which implies in particular that it cannot be a minimizer.
\end{proof}
\medskip

\subsection{Existence of minimizers}
Before proving the existence of minimizer, we need some technical results to localize the mass of a function in $SBV^{1/2}$. Below, we denote by $K_p$ the unit cube centered in $p\in\mathbb{Z}^n$.
\begin{lemma}\label{LemmaLocalization}
Let $u$ be a non-trivial positive function on $SBV^{1/2}(\mathbb{R}^n)$. There exists $p\in\mathbb{Z}^n$ such that
\[|\{ u>0\}\cap K_{p}|\geq \left(\frac{C\Vert u\Vert_{L^2}^2}{\Vert u\Vert_{L^2}^2+Q(u)}\right)^n, \]
where $C=C(n)>0$.
\end{lemma}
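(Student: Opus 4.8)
The plan is to exploit the Sobolev embedding $BV(\RR^n) \hookrightarrow L^{n/(n-1)}(\RR^n)$ applied to $u^2$ together with a covering of $\RR^n$ by the unit cubes $\{K_p\}_{p \in \ZZ^n}$, and to argue that if the mass of $\{u>0\}$ were spread too thinly over all cubes, then the total $L^2$ mass would be forced to be too small relative to $\|u\|_{L^2}^2 + Q(u)$. First I would recall that for $v := u^2 \in SBV(\RR^n)$ we have $[v]_{BV} = |Dv|(\RR^n) = \int_{\RR^n} 2u|\nabla u|\,d\Ln + \int_{J_u}(\overline u^2 - \underline u^2)\,d\Ls \le 2\|u\|_{L^2}\|\nabla u\|_{L^2} + \int_{J_u}(\overline u^2+\underline u^2)\,d\Ls$, which by Young's inequality is bounded by $C(\|u\|_{L^2}^2 + Q(u))$ for a dimensional constant $C$; call this quantity $A$. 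The Sobolev inequality then gives $\|u^2\|_{L^{n/(n-1)}(\RR^n)} \le c_n A$.

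Next I would localize: on each cube $K_p$, by Hölder's inequality,
\[
\int_{K_p} u^2\,d\Ln \le |\{u>0\}\cap K_p|^{1/n}\left(\int_{K_p} u^{2n/(n-1)}\,d\Ln\right)^{(n-1)/n}.
\]
Let $a := \max_{p} |\{u>0\}\cap K_p|$. Summing over $p \in \ZZ^n$ and using $\sum_p \left(\int_{K_p} u^{2n/(n-1)}\right)^{(n-1)/n} \le \left(\sum_p \int_{K_p} u^{2n/(n-1)}\right)^{(n-1)/n} = \|u^2\|_{L^{n/(n-1)}(\RR^n)}$ (superadditivity of $t \mapsto t^{(n-1)/n}$ for the $\ell^1$–$\ell^{(n-1)/n}$ comparison, valid since $(n-1)/n \le 1$), one obtains
\[
\|u\|_{L^2(\RR^n)}^2 = \sum_{p} \int_{K_p} u^2\,d\Ln \le a^{1/n}\,\|u^2\|_{L^{n/(n-1)}(\RR^n)} \le a^{1/n}\, c_n A.
\]
Rearranging gives $a^{1/n} \ge \|u\|_{L^2}^2/(c_n A) \ge \|u\|_{L^2}^2/(c_n C(\|u\|_{L^2}^2 + Q(u)))$, hence $a \ge \left(C' \|u\|_{L^2}^2/(\|u\|_{L^2}^2 + Q(u))\right)^n$ with $C' = 1/(c_n C)$, which is exactly the claimed bound (with $C = C'$). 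Since $a$ is attained at some $p \in \ZZ^n$, this $p$ is the desired one.

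The only genuinely delicate point is the summation step: one must check that the inequality $\sum_p b_p^{(n-1)/n} \le \big(\sum_p b_p\big)^{(n-1)/n}$ used to collapse the sum of the local $L^{n/(n-1)}$ norms into the global one is correct, which holds because the exponent $(n-1)/n$ lies in $(0,1]$ so the function is subadditive; equivalently one may pull out $a^{1/n}$ first and then sum the cube integrals of $u^2$ directly, which is cleanest and avoids the $\ell^p$ subtlety entirely — this is the route I would actually write, since then only $\sum_p \int_{K_p} u^2 = \|u\|_{L^2}^2$ and $\sum_p \int_{K_p} u^{2n/(n-1)} = \|u^2\|_{L^{n/(n-1)}}^{n/(n-1)}$ are needed, both being plain additivity over a partition. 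I expect no other obstacle; the constant $C(n)$ is explicit up to the Sobolev and Young constants.
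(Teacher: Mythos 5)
There is a genuine error at the summation step, and the ``cleaner'' route you propose as a fallback does not rescue it. For $\alpha=(n-1)/n\in(0,1)$ the map $t\mapsto t^\alpha$ is \emph{subadditive}, which means $\bigl(\sum_p b_p\bigr)^\alpha\le\sum_p b_p^\alpha$, i.e.\ the inequality is the \emph{reverse} of what you assert. Concretely, with $b_p=\int_{K_p}u^{2n/(n-1)}$, the claim $\sum_p b_p^{(n-1)/n}\le\|u^2\|_{L^{n/(n-1)}(\RR^n)}$ is false: if the mass is spread over $N$ cubes with roughly equal share, the left side is about $N^{1/n}$ times larger than the right. This failure is not a notational slip but reflects the heart of the lemma — the quantity $\sum_p\|u^2\|_{L^{n/(n-1)}(K_p)}$ can genuinely be much bigger than $\|u^2\|_{L^{n/(n-1)}(\RR^n)}$, precisely when the support spreads out, which is exactly the regime one must rule out. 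Pulling $a^{1/n}$ out first only changes the order of operations; after summing $\int_{K_p}u^2$ over $p$, you are still left needing an upper bound on $\sum_p\bigl(\int_{K_p}u^{2n/(n-1)}\bigr)^{(n-1)/n}$, which plain additivity of the integrals does not provide.

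The paper sidesteps this by applying the $BV\hookrightarrow L^{n/(n-1)}$ embedding \emph{locally on each unit cube} rather than globally: since $u^2|_{K_p}\in BV(K_p)$ and the cubes are all congruent, one has
\[
\|u^2\|_{L^{n/(n-1)}(K_p)}\le C_n\Bigl(\|u\|_{L^2(K_p)}^2+Q_{|K_p}(u)\Bigr),
\]
and the right-hand side, unlike $\|u^2\|_{L^{n/(n-1)}(K_p)}$, \emph{is} additive over the partition. Inserting this into your local H\"older estimate, pulling out $a^{1/n}$, and summing over $p$ gives $\|u\|_{L^2}^2\le C\,a^{1/n}\bigl(\|u\|_{L^2}^2+Q(u)\bigr)$ directly, with no $\ell^p$ manipulation. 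Your opening moves (H\"older on each cube, the bound $[u^2]_{BV}\lesssim \|u\|_{L^2}^2+Q(u)$ via Young) are all fine; the missing idea is to localize the Sobolev embedding to the cubes so that the sum closes up.
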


\begin{proof}

The family $ (K_p)_{p\in\mathbb{Z}}$ is a covering of $\mathbb{R}^n$. For all $p$, we have
\begin{multline*}
\Vert u\Vert_{L^2(K_p)}^2\leq |\{ u>0\}\cap K_p|^{1/n}\Vert u^2\Vert_{L^{\frac{n}{n-1}}(K_p)}\\
\leq C\left(\sup_q |\{ u>0\}\cap K_q|\right)^{1/n}\left(\Vert u\Vert_{L^2(K_p)}^2+Q_{|K_p}(u)\right) 
\end{multline*}
since $u^2_{|K_p}\in BV(K_p)$ for each $p$. Summing over $p$, we get, for possibly a different constant,
\[\Vert u\Vert_{L^2}^2\leq C\left(\sup_q |\{ u>0\}\cap K_q|\right)^{1/n}\left(\Vert u\Vert_{L^2}^2+Q(u)\right).\]
\end{proof}
Below, we give an obvious  variation on the compactness result of \cite{Buc10}.
\begin{lemma}\label{lemmaExtraction}
Let $(u_i)$ be a sequence of $SBV^{1/2}(\mathbb{R}^n)$ such that
\[\limsup_i\left( Q(u_i)+N^{c,\epsilon}(u_i)+\Vert u_i\Vert_{L^2}^2\right)<\infty.\]
Then there exists a subsequence, still denoted with the same index, and a function $u\in SBV^{1/2}(\mathbb{R}^n)$ such that
 $u_i\rightarrow u$ in $L^2_\text{loc}(\mathbb{R}^n)$ and
for all open set $A\Subset\mathbb{R}^n$, 
$$Q_{|A}(u)\leq \liminf_{i\rightarrow\infty} Q_{|A}(u_i)\qquad \mbox{ and }\qquad N_{|A}^\epsilon(u)\leq \liminf_{i\rightarrow\infty} N_{|A}^\epsilon(u_i).$$
\end{lemma}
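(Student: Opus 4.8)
The statement to prove is Lemma \ref{lemmaExtraction}, a compactness/lower-semicontinuity result for sequences bounded in the natural energy. The plan is to adapt the compactness theorem of \cite{Buc10} for $SBV^{1/2}$, tracking additionally the new term $N^{c,\epsilon}$. First I would work with the squared functions: set $w_i = u_i^2 \in SBV(\mathbb{R}^n)$ and observe that the bound $\limsup_i (Q(u_i) + \Vert u_i\Vert_{L^2}^2) < \infty$ translates, by the chain rule in $SBV$ (so $\nabla w_i = 2 u_i \nabla u_i$ and the jump part of $Dw_i$ is controlled by $(\underline{u_i}^2 + \overline{u_i}^2)\Ls\lfloor_{J_{u_i}}$), into a uniform bound on $\Vert w_i\Vert_{BV(A)}$ for every fixed $A \Subset \mathbb{R}^n$, together with a uniform bound on $\int_A |\nabla w_i|^2\,d\Ln / $ weighting — more precisely one gets $\Vert \nabla w_i\Vert_{L^1(A)} \le C$ and a uniform bound on $\Ls(J_{w_i}\cap A)$ via the non-degeneracy-free estimate $\Ls(J_{w_i}\cap A) \le$ (jump energy), so that on each $A$ the sequence $(w_i)$ is equibounded in $SBV$ with equi-integrable approximate gradients and equibounded jump sets. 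Ambrosio's $SBV$ compactness theorem then yields a subsequence and $w \in SBV_{loc}(\mathbb{R}^n)$ with $w_i \to w$ in $L^1_{loc}$; setting $u = \sqrt{w}$ gives $u \in SBV^{1/2}(\mathbb{R}^n)$ and, after passing to a further subsequence, $u_i \to u$ a.e. and hence in $L^2_{loc}$ (using the $L^2$ bound to upgrade $L^1_{loc}$ convergence of $u_i^2$).

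Next I would establish the two lower-semicontinuity inequalities on a fixed open $A \Subset \mathbb{R}^n$. For $Q_{|A}$, this is exactly the lower semicontinuity asserted in \cite{Buc10}: the gradient part $\int_A |\nabla u|^2$ is lower semicontinuous under the $SBV$ convergence of the squared functions by Ambrosio's theorem applied to $w_i$, and the jump term $\frac{\beta}{2}\int_{J_u\cap A}(\underline u^2 + \overline u^2)\,d\Ls$ is lower semicontinuous as the surface part of $|D w|$ (a $BV$ lower-semicontinuity statement); combining gives $Q_{|A}(u) \le \liminf_i Q_{|A}(u_i)$. For $N^{c,\epsilon}_{|A}(u) = \beta c \int_{J_u \cap A}(\underline u^{1+\epsilon} + \overline u^{1+\epsilon})\,d\Ls$, the same mechanism applies: since $1 + \epsilon \in (1,2)$ is a concave-in-the-right-sense exponent, the map $u \mapsto \int_{J_u}(\underline u^{1+\epsilon} + \overline u^{1+\epsilon})\,d\Ls$ is lower semicontinuous along sequences with controlled jump energy, e.g. by viewing it through $u^{(1+\epsilon)/2} \in SBV$ (whose $BV$ seminorm is controlled by $Q$ via interpolation with the $L^2$ bound) or directly via Ambrosio's general lower-semicontinuity theorem for jump integrands $\theta(\underline u, \overline u, \nu)$ that are BV-elliptic — here $\theta(a,b) = a^{1+\epsilon} + b^{1+\epsilon}$ is subadditive and one-homogeneous-dominated in the relevant sense, which suffices.

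The main obstacle I expect is the lower semicontinuity of the $N^{c,\epsilon}$ term, and more precisely ensuring that the squared functions $w_i$ really do form an admissible sequence for the $SBV$ compactness and closure theorems — i.e. that the jump sets $J_{w_i}$ have uniformly bounded $\Ls$-measure on each $A$. Without an a priori non-degeneracy bound (which is available for minimizers via Lemma \ref{lemmaCaff} but \emph{not} assumed here for a general sequence), one only controls $\int_{J_{u_i}}(\underline{u_i}^2 + \overline{u_i}^2)\,d\Ls$, not $\Ls(J_{u_i})$; so the correct route is to invoke the version of Ambrosio's theorem where the surface energy density is merely required to be bounded below by a \emph{superlinear-at-zero} modulus — or, following exactly \cite{Buc10}, to note that the relevant closure for $SBV^{1/2}$ uses the energy $\int_{J_u} (\underline u^2 + \overline u^2)\,d\Ls$ itself as the controlling surface term and that the structure theorem there already handles vanishing traces. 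Since the lemma is explicitly described as an "obvious variation" of \cite{Buc10}, the proof will consist in citing that result for the $Q$-part and repeating its argument verbatim with $\theta(a,b) = \beta(a^2+b^2)/2$ replaced by $\theta(a,b) + \beta c(a^{1+\epsilon}+b^{1+\epsilon})$, checking only that this modified density still satisfies the hypotheses (monotonicity, lower semicontinuity, the right growth) needed for the joint lower-semicontinuity and localization.
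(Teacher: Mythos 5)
The paper itself supplies \emph{no} proof of Lemma \ref{lemmaExtraction}: it is stated and immediately preceded only by the sentence ``we give an obvious variation on the compactness result of \cite{Buc10}.'' So the paper's intended argument is precisely the one you describe in your last sentence: take the $SBV^{1/2}$ compactness/lower-semicontinuity theorem of \cite{Buc10} and repeat it verbatim, checking that the additional surface density $\beta c\,(a^{1+\epsilon}+b^{1+\epsilon})$ causes no harm. At that level your plan matches the paper's.

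You also correctly isolate the real obstacle: the hypothesis only controls $\int_{J_{u_i}}(\underline u_i^2+\overline u_i^2)d\Ls$, not $\Ls(J_{u_i})$, so neither the basic $SBV$ closure theorem nor its squared form applied to $w_i=u_i^2$ is directly available. Two of the fallback remarks you offer, however, would not actually close the gap. First, the ``superlinear-at-zero modulus'' version of Ambrosio's compactness theorem requires a concave increasing $\theta$ with $\theta(t)/t\to+\infty$ as $t\to 0^+$; the integrand effectively at play here, $\theta(t)=t^2$, satisfies $\theta(t)/t\to 0$, i.e. it is \emph{sub}linear at the origin, so that theorem cannot be invoked. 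Second, $t\mapsto t^{1+\epsilon}$ on $[0,\infty)$ is \emph{super}additive, not subadditive, so the generic ``subadditive $\Rightarrow$ BV-elliptic'' heuristic for separable surface integrands $g(\underline u)+g(\overline u)$ does not apply as stated; nor does $u^{(1+\epsilon)/2}$ obviously belong to $SBV$ or $SBV^{1/2}$ for general $u\in SBV^{1/2}$. These are the genuine reasons why \cite{Buc10} had to introduce a bespoke argument (based on level truncations such as $(u_i-\delta)_+$, which recover a bound on $\Ls(J_{(u_i-\delta)_+}\cap A)$ of order $\delta^{-2}Q(u_i)$, followed by a diagonal limit in $\delta$, and a localization device for the lower semicontinuity of the surface terms), rather than quoting an off-the-shelf closure theorem. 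For the added term $N^{c,\epsilon}$ the same truncation device works because $t^{1+\epsilon}\le \delta^{\epsilon-1}t^2$ for $t\ge\delta$, so the extra surface integrand is dominated on each truncation level by the one already controlled. In short: your main route is the right one; the two auxiliary shortcuts you mention would fail, and the correct fix is exactly the truncation mechanism already present in \cite{Buc10}.
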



 We are now in a position to prove Theorem \ref{bgn03}
\begin{proof}[ Proof of Theorem \ref{bgn03}]
 In view of the preceding results, it suffices to prove the existence of a minimizer.
 The proof goes as follows. Let $(u_i)$ be a minimizing sequence.
\begin{itemize}[label=\textbullet]
\item We show that up to a translation and up to subsequences, $(u_i)_i$ converges in some sense to a non-trivial function $u$ such that $|\{u>0\}|=:m'\in ]0,m]$.
\item We show that $u$ minimizes $\Ece$ in $\mathcal{U}_{m'}$.
\item We suppose by contradiction that $m'<m$: this allows us to find a sequence $(p_i)$ such that $|p_i|\rightarrow\infty$ and $(u_i(\cdot-p_i))_i$ converges to another non-trivial minimizer $v$ in $\mathcal{U}_{m''}$ for $m''\in ]0,m-m']$.
\item Then, knowing the structure of minimizers (smooth function defined on balls), we modify the sequence $(u_i)_i$ to build a new minimizing sequence that converges to a function whose support is given by the union of two disjoint balls, which is absurd. Thus $m'=m$ and $u\in \Um$ is the minimizer we were looking for.
\end{itemize}
\par
We proceed in several steps.

\vskip10pt\noindent \textbf{Step 1: Convergence of }$\mathbf{(u_i)}$.
We remind that, according to Lemma \ref{ResultControl}, a minimizing sequence $(u_i)_i$ verifies
\[\underset{i\geq 1}{\sup}\left(Q(u_i)+N^{c,\epsilon}(u_i)+\Vert \Theta (u_i)\Vert_{L^1}\right) <\infty.\]
Moreover, since for all $w\in\mathcal{U}_m$, $\Ece(tw)<0$ for small enough $t$, we can suppose without loss of generality that $-\Ece(u_i)$ is bounded away from $0$. In particular this implies that $\Vert\Theta (u_i)\Vert_{L^1}$ is bounded away from 0. This with the fact that $u\in\mathcal{U}_m$ implies that $\Vert u_i\Vert_{L^2}$ is bounded away from 0. We let
\[S_i:=\underset{p\in\mathbb{Z}^n}{\sup}\int_{K_p}u_i^2d\Ln.\]
Then $(S_i)$ is a bounded sequence and we can suppose that it converges up to subsequences to a limit $S\geq 0$. 
\par
We show that $S>0$.
Indeed, assume by contradiction that $S=0$. Let $K_{p_i}$ be a cube chosen by applying Lemma \ref{LemmaLocalization} to $u_i$, with $p_i\in \mathbb{Z}^n$. Since $Q(u_i)$ is bounded and $\Vert u_i\Vert_{L^2}$ is bounded below, then for a constant $\delta>0$ that does not depend on $i$
\[|K_{p_i}\cap\{u_i>0\}|\geq 2\delta.\]
Let 
$$
v_i:=u_i1_{\mathbb{R}^n \setminus \widetilde{K}_{p_i}},
$$
where $\widetilde{K}_{p_i}$ is a slightly smaller version of $K_{p_i}$ chosen so that the integral of $u_i^2$ on $\partial\widetilde{K}_{p_i}$ goes to 0 as $i\rightarrow \infty$ (using the assumption that $S=0$), and $|\widetilde{K}_{p_i}\cap\{u_i>0\}|\geq \delta$. Then, since $S=0$ 
\[\Ece(v_i)\leq \Ece(u_i)+o_{i\rightarrow\infty}(1)\]
so that $(v_i)$ is also a minimizing sequence. Since $|\{ v_i>0\}|\leq m-\delta$, letting $t:=\left(\frac{m}{m-\delta}\right)^{1/n}$, $v_i(\cdot/t)$ is in $\mathcal{U}_m$ and we get 
\[\underset{\mathcal{U}_m}{\inf}\Ece\leq \liminf_{i\rightarrow\infty}  \Ece\left(v_i(\cdot/t)\right)\leq t^{n}\liminf_{i\rightarrow\infty} \Ece(v_i)=t^n \underset{\mathcal{U}_m}{\inf}\Ece,\]
which is absurd (recall that $\underset{\mathcal{U}_m}{\inf}\Ece<0$).
Hence, for all $i$ large enough there exists $p_i\in\mathbb{Z}^n$ such that
\begin{equation}
\label{eq:Si}
\int_{K_{p_i}}u_i^2d\Ln\geq \frac{S}{2}>0.
\end{equation}
We translate each $u_i$ in $\RR^n$ and assume that $p_i=0$ for all $i$. Using Lemma \ref{lemmaExtraction}, there exists $u\in SBV^{1/2}(\mathbb R^n)$ such that
\begin{align*}
u_i\underset{L^2_\text{loc}}{\longrightarrow} &u \not=0\\
\end{align*}
with local lower semicontinuity on $Q$ and $N^{c,\epsilon}$.

\vskip10pt\noindent \textbf{ Step 2: The limit of $\mathbf{(u_i)_i}$ is a minimizer of $\Ece$ in $\mathcal{U}_{m'}$ where $m':=|\{ u>0\}|\in ]0,m]$. }  By Step 1 we know that $u$ is nontrivial. 
Let $v\in \mathcal{U}_{m'}$, and let us consider the functions in $SBV^{1/2}(\mathbb R^n)$ given by
\[
v^r=1_{B_r}v+1_{\mathbb{R}^n\setminus B_r}u 
\qquad\mbox{ and } \qquad
v_i^r=1_{B_r}v+1_{\mathbb{R}^n\setminus B_r}u_i.
\]

We claim that we can find a set $D\subseteq \mathbb R$ with $|D|=0$ such that for $r\not\in D$
\begin{equation}
\label{eq:claim2}
\Ece(u)-\Ece(v^{r})\leq C\liminf_{i\to \infty}\left(1-\frac{m}{m_i ^{r}}\right)_+, 
\end{equation}
and
\begin{equation}
\label{eq:claim3}
\liminf_{i\rightarrow \infty}(m_i^{r}-m)\le e_r\underset{r\rightarrow \infty}{\longrightarrow} 0,
\end{equation}
where $m_i^r:=|\{v_i^r>0\}|$, and $C$ does not depend on $i$ and $k$. 
\par
We can then find $r_k\to +\infty$ with $r_k\not\in D$ such that
\begin{equation}
\label{eq:claim1}
\Ece(v^{r_k})\underset{k\rightarrow \infty}{\longrightarrow}\Ece(v).
\end{equation}
 Indeed we have 
$$
J_{v^r}=(J_u\cap(\RR^n\setminus\overline{B_r}))\cup (J_v\cap B_r)\cup J^r,
$$ 
where $J^r$ is the subset of points in $\partial B_r$ where $\overline{u1_{\RR^n\setminus B_r}}\neq \overline{v1_{B_r}}$. 
Choose $r$ such that $\Ls(\partial B_r\cap J_u)=\Ls(\partial B_r\cap J_v)=0$, which amounts to choosing almost any $r>0$. Then

\[\Ece(v^r)-\Ece(v)=\mathcal{E}^{c,\epsilon}(u1_{|\mathbb{R}^n\setminus B_r})-\mathcal{E}^{c,\epsilon}(v1_{|\mathbb{R}^n\setminus B_r})+S^r\]
where
\begin{equation}
\label{eq:Sr}
S^r=\frac{\beta}{2}\int_{J^r}\left[(u^2+2cu^{1+\epsilon}) +(v^2+2cv^{1+\epsilon})\right]d\Ls.
\end{equation}
$\mathcal{E}^{c,\epsilon}(u1_{|\mathbb{R}^n\setminus B_r})-\mathcal{E}^{c,\epsilon}(v1_{|\mathbb{R}^n\setminus B_r})$ goes to zero as $r$ goes to infinity by dominated convergence. Since $u$ and $v$ belong to $L^2$ with supports of volume $m'$, they belong also to $L^{1+\epsilon}$, which means that $\int_0^\infty S^r dr<\infty$; in particular this implies that we can find $r_k\to +\infty$ with $r_k\not\in D$ such that $S^{r_k}\rightarrow 0$, so that \eqref{eq:claim1} follows.
\par
Taking \eqref{eq:claim1}, \eqref{eq:claim2} and \eqref{eq:claim3} into account, we infer easily that 
$$
\Ece(u) \le \Ece(v),
$$ 
i.e. $u$ is a minimizer of $\Ece$ in $\mathcal{U}_{m'}$. In particular, $u$ is supported on a ball.
\par
In order to conclude Step 2, we prove claims \eqref{eq:claim2} and \eqref{eq:claim3}.
\begin{itemize}
%
\item[(a)] Let us consider claim \eqref{eq:claim2}. Comparing $v^r$ and $u$ we get   
\[\Ece(u)-\Ece(v^r)=\mathcal{E}^{c,\epsilon}(u1_{|B_r})-\mathcal{E}^{c,\epsilon}(v1_{|B_r})-S^r,\]
 where $S^r$ is given by \eqref{eq:Sr}.
Setting 
$$
S^r_i=\frac{\beta}{2}\int_{\partial B_r\cap \{ u\neq v\}^1}\left[(u_i^2+2cu_i^{1+\epsilon}) +(v^2+2cv^{1+\epsilon})\right]d\Ls
$$ 

we have for a.e. $r>0$
$$
S_i^r\underset{i\rightarrow \infty}{\longrightarrow} S^r
$$
as $u_i\underset{L^2(\partial B_r)}{\rightarrow} u$ for a.e. $r>0$. We infer

\begin{align}
\label{eq:ineq}
\Ece(u)-\Ece(v^r)&\leq \liminf_{i\rightarrow\infty}\left(\mathcal{E}^{c,\epsilon}_{|B_r}(u_i)-\mathcal{E}^{c,\epsilon}_{|B_r}(v)\right)-S^r\\
\nonumber&=\liminf_{i\rightarrow\infty}\left(\Ece(u_i)-\Ece(v_i^r)+S_i^r\right)-S^r\\
\nonumber&=\liminf_{i\rightarrow\infty}\left(\Ece(u_i)-\Ece(v_i^r)\right) \\
\nonumber&=\inf_{\Um}\Ece-\limsup_{i\rightarrow\infty}\Ece(v_i^r).
\end{align}
\par
If $m_i^r\leq m$, we know this last quantity is nonpositive. Suppose now that $m_i^r> m$, and let
\[w_i^r(x):=v_i^r\left(\Big[\frac{m_i^r}{m}\Big]^{1/n}x\right).\]
Then $w_i^r$ has a support of volume $m$, and so with a change of variable
\begin{align*}
 \inf_{\Um}\Ece \le  \Ece(w_i^r)&\leq \Ece(v_i^r)+\left(1-\frac{m}{m_i ^r}\right)\Vert\Theta(v_i^r)\Vert_{L^1}\\
&\leq \Ece(v_i^r)+\left(1-\frac{m}{m_i ^r}\right)\Vert\Theta(v)+\Theta( u_i )\Vert_{L^1}\\
&\leq \Ece(v_i^r)+C\left(1-\frac{m}{m_i ^r}\right), 
\end{align*}
 where $C$ does not depend on $i,r$.  Then 
 $$
\inf_{\Um}\Ece\le \limsup_{i\to \infty}\Ece(v_i^r)+ C\liminf_{i\to \infty}\left(1-\frac{m}{m_i ^r}\right),
$$
so that coming back to \eqref{eq:ineq} we deduce
$$
\Ece(u)-\Ece(v^r)\le C\liminf_{i\to \infty}\left(1-\frac{m}{m_i ^r}\right).
$$
Collecting both cases $m_i^r\leq m$ and $m_i^r>m$, we get precisely claim \eqref{eq:claim2}.
\vskip5pt
\item[(b)] Let us come to claim \eqref{eq:claim3}. We have for every $r>0$

\begin{align*}
\liminf_{i\rightarrow \infty}(m_i^r-m)&=\liminf_{i\rightarrow \infty}\left(|\{ v>0\}\cap B_r|-|\{ u_i>0\}\cap B_r|\right)\\
&=|\{ v>0\}\cap B_r|-\limsup_{i\rightarrow \infty}\left(|\{ u_i>0\}\cap B_r|\right)\\
&\le |\{ v>0\}\cap B_r|-|\{ u>0\}\cap B_r|=:e_r\underset{r\rightarrow\infty}{\longrightarrow}0.
\end{align*}
\end{itemize}

\vskip10pt\noindent \textbf{Step 3: Existence of another sequence $\mathbf{(u_i)_i}$ such that $\mathbf{(u_i(\cdot-p_i))_i}$ converges if $m'<m$}. 
 Let $R>0$ be such that the support of $u$ is contained in $B_R$, so that
\begin{equation}
\label{eq:uiL2}
u_i\underset{L^2_{loc}(\mathbb R^n\setminus B_R)}{\longrightarrow} 0.
\end{equation}
We can suppose that 
$$
\Vert u_i\Vert_{L^2(\partial B_R)}\rightarrow 0.
$$ 
Let 
$$
v_i:=u_i 1_{\mathbb{R}^n\setminus B_{2R}}.
$$ 
 Notice that
\begin{equation}
\label{eq:bound-vi}
Q(v_i)+N^{c,\epsilon}(v_i)\leq 
C.
\end{equation}
We claim 
\begin{equation}
\label{eq:liminf}
\liminf_{i\to \infty}\Vert v_i\Vert_{L^2}^2>0
\end{equation}

Indeed by contradiction, we would have (up to a subsequence) $\Vert v_i\Vert_{L^2(\mathbb{R}^n)}\rightarrow 0$, which implies strong $L^2$ convergence of $(u_i)$ to $u$ and so $\Vert \Theta(u_i)\Vert_{L^1}\rightarrow \Vert \Theta(u)\Vert_{L^1}$. This implies by lower semicontinuity of $Q$ and $N^{c,\epsilon}$
\[\Ece(u)\leq \liminf_{i\rightarrow\infty}\Ece(u_i)=\inf_{\Um}\Ece.\]
But since $m'<m$, then for $t:=\Big[\frac{m}{m'}\Big]^{1/n}>1$, we get
\[\Ece(u(\cdot/t))<t^n\inf_{\Um}\Ece\]
which is absurd since $u(\cdot/t)$ is in $\Um$ (recall $\inf_{\Um}\Ece<0$). Then \eqref{eq:liminf} follows. 
\par
This gives, in particular, that
\[\liminf_{i\rightarrow\infty} \left(\frac{c\Vert v_i\Vert_{L^2}^2}{\Vert v_i\Vert_{L^2}^2+Q(v_i)}\right)^n >0.\]
Taking into account \eqref{eq:bound-vi}, we can then proceed as in Step 2 (by defining the ``concentration" $S$) to show that for each $i$ there exists $p_i\in\mathbb{R}^n$ such that 
\[\liminf_{i\rightarrow\infty}\left(\Vert v_i\Vert_{L^2(K_{p_i})}\right)>0.\]
Notice that in view of \eqref{eq:uiL2} we know that necessarily 
\begin{equation}
\label{eq:pi}
|p_i|\rightarrow+\infty.
\end{equation}
As previously, we can show that $u_i(\cdot-p_i)$ converges in the $L^2_\text{loc}$ sense to a function $v$ in $\mathcal{U}_{m''}$, where $m''\in ]0,m-m']$ that is a minimizer of $\Ece$ in $\mathcal{U}_{m''}$. As a consequence, its support is also a ball.\bigbreak

\vskip10pt\noindent \textbf{Step 4: Construction of an incompatible minimizing sequence in the case $m'<m$}. The idea is now to bring together $u$ and $v$ constructed in Step 2 and 3. While each ball is optimal, the union of two disjoint balls is not (because it is not a ball, which is the only possible minimizer). Let $R>0$ be big enough such that the supports of $u$ and $v$ are contained in $B_R$. Let 
$$
w_i(x):=(1_{B_{R}(p_i)} u_i)(p_i+x).
$$
We can choose $R$ slightly bigger such that $\Vert w_i\Vert_{L^2(\partial B_{R}(p_i))}\rightarrow 0$. Let
\[
\widetilde{u_i}(x):=1_{\mathbb{R}^n\setminus (B_{R}(2Re_n)\cup B_{R}(p_i))}(x)u_i(x)+w_i(x-2Re_n).
\]
Then, up to choosing a slightly bigger $R$ (so that the boundary terms all go to 0), we get
\[\Ece(\widetilde{u_i})\leq \Ece(u_i)+o_{i\rightarrow \infty}(1).\]
By construction $|\{\widetilde{u_i}>0\}|\leq m$, so up to a dilation that can only decrease $\Ece$, $(\widetilde{u_i})$ is also a minimizing sequence.

By similar arguments, we know $\widetilde{u_i}$ converges to a minimizer $\widetilde{u}$ for a certain volume less than $m$. But by construction, the support of this limit is exactly the union of two balls, so it cannot be a minimizer. This means that our assumption $m'<m$ is false.

The consequence is that $m'=m$, and so that $u$ is a minimizer of $\Ece$ in $\Um$.
\end{proof}


\section{The ball minimizes the geometric functional}\label{bgn12}
\begin{proposition}
\label{prop:minballc}
Let $u\in \Um$, then
\[\Ec(u)\geq E^c(B^m).\]
\end{proposition}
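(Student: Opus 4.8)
The plan is to deduce Proposition \ref{prop:minballc} from Theorem \ref{bgn03} by letting $\epsilon\to 0$. Fix $u\in\Um$; I may assume $\Ec(u)<+\infty$, and also $c>0$, since for $c=0$ one has $\Ece=\Ec=\E$ for every $\epsilon$ and the claim then follows immediately from Theorem \ref{bgn03} (the radial minimizer, supported on a ball of measure $m$, is an admissible competitor in the obstacle problem defining $E^c(B^m)$). Under these assumptions every term defining $\Ec(u)$ is finite: applying \eqref{bgn16} with exponents $q$ and $2$ gives $\int_{\Rn}\Theta(u)\,d\Ln<\infty$, while finiteness of the (nonnegative) jump energy gives $\int_{J_u}(\underline u+\overline u+\underline u^{2}+\overline u^{2})\,d\Ls<\infty$, where $c>0$ is used to retain the linear terms. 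The one elementary scalar estimate I would isolate is that, for $\epsilon\in(0,1]$,
\[
t-t^{1+\epsilon}\le \eta(\epsilon):=\tfrac{\epsilon}{1+\epsilon}\,(1+\epsilon)^{-1/\epsilon}\qquad\text{for all }t\ge 0,
\]
with $\eta(\epsilon)\to 0$ as $\epsilon\to 0^{+}$ (indeed $t\mapsto t-t^{1+\epsilon}$ is nonpositive on $[1,+\infty)$ and attains its maximum over $[0,1]$ at $t=(1+\epsilon)^{-1/\epsilon}$); I shall also use that $|t^{1+\epsilon}-t|\le t+t^{2}$ for all $t\ge 0$ and $\epsilon\in(0,1]$.

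Next, for each $\epsilon\in(0,1]$ let $u_\epsilon\in\Um$ be the minimizer of $\Ece$ given by Theorem \ref{bgn03}: it is radial and, up to a translation, its support is $B^{m}$; by Lemma \ref{lem:support} we have $u_\epsilon|_{B^{m}}\in H^{1}(B^{m})$, $u_\epsilon>0$ on $B^{m}$, and $J_{u_\epsilon}=\partial B^{m}$ up to an $\Ls$–negligible set, with $\underline{u_\epsilon}=0$ and $\overline{u_\epsilon}$ equal to the (constant) trace $\tau_\epsilon:=u_\epsilon|_{\partial B^{m}}\ge 0$. Comparing the two boundary energies term by term, and recalling that $\Ec(w)=E^{c}(w;B^{m})$ whenever $w\in H^{1}(B^{m})$, $w\ge 0$, is extended by $0$, I get
\[
\Ece(u_\epsilon)=E^{c}(u_\epsilon;B^{m})+\beta c\,\Ls(\partial B^{m})\,\big(\tau_\epsilon^{\,1+\epsilon}-\tau_\epsilon\big)\ \ge\ E^{c}(B^{m})-\beta c\,\Ls(\partial B^{m})\,\eta(\epsilon),
\]
where the first inequality uses $u_\epsilon|_{B^{m}}$ as a competitor for $E^{c}(B^{m})$ and the second the scalar estimate above. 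Since $u\in\Um$, Theorem \ref{bgn03} gives $\Ece(u)\ge\Ece(u_\epsilon)$, hence $\Ece(u)\ge E^{c}(B^{m})-\beta c\,\Ls(\partial B^{m})\,\eta(\epsilon)$.

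Finally I would pass to the limit $\epsilon\to 0$. Writing $\Ece(u)-\Ec(u)=\beta c\int_{J_u}\big[(\underline u^{1+\epsilon}-\underline u)+(\overline u^{1+\epsilon}-\overline u)\big]\,d\Ls$ and invoking dominated convergence via $|t^{1+\epsilon}-t|\le t+t^{2}$ and $\int_{J_u}(\underline u+\overline u+\underline u^{2}+\overline u^{2})\,d\Ls<\infty$, one gets $\Ece(u)\to\Ec(u)$; since also $\eta(\epsilon)\to 0$, this yields $\Ec(u)\ge E^{c}(B^{m})$, as claimed. The only genuinely delicate feature of the problem — the critical behaviour of the negative boundary energy at the contact with the obstacle — has already been absorbed into Theorem \ref{bgn03} through the $\epsilon$–regularization of the jump term, and here it only resurfaces as the harmless quantity $\beta c\,\Ls(\partial B^{m})\,\eta(\epsilon)$; the point that makes this work cleanly is that the bound $t-t^{1+\epsilon}\le\eta(\epsilon)$ holds \emph{uniformly in} $t\ge 0$, so no a priori control on the boundary value $\tau_\epsilon$ (which could otherwise depend badly on $\epsilon$) is needed.
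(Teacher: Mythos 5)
Your proof is correct and follows the same overall strategy as the paper: for fixed $\epsilon>0$, invoke Theorem \ref{bgn03} to compare $u$ with the radial minimizer $u_\epsilon$ supported on $B^m$, and then let $\epsilon\to 0$, using dominated convergence (via $t^{1+\epsilon}\le t+t^2$) to pass from $\Ece(u)$ to $\Ec(u)$. The genuine difference is in how you control the error coming from the $\epsilon$-regularization of the jump term for $u_\epsilon$. The paper first proves that $\Vert u_\epsilon\Vert_{L^\infty(\partial B^m)}$ is bounded uniformly in $\epsilon$, by comparing $\Ece(u_\epsilon)$ with $\Ece(1_{B^m})$ and reading off a bound from the term $N^{c,\epsilon}(u_\epsilon)$; only then does it conclude that $c\beta\int_{\partial B^m}\big[u_\epsilon-u_\epsilon^{1+\epsilon}\big]\,d\Ls\to 0$. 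You instead isolate the one-sided, $t$-uniform scalar estimate $t-t^{1+\epsilon}\le\eta(\epsilon)$ for all $t\ge 0$, with $\eta(\epsilon)\to 0$, which directly yields $\Ece(u_\epsilon)\ge E^c(B^m)-\beta c\,\Ls(\partial B^m)\,\eta(\epsilon)$ regardless of the value of the trace $\tau_\epsilon$. This bypasses the paper's auxiliary $L^\infty$ bound entirely and is a cleaner way to absorb the error: the Proposition only requires a lower bound on $\Ece(u_\epsilon)$, so only the one-sided estimate is needed, and since $t-t^{1+\epsilon}\le 0$ for $t\ge 1$ the supremum over all $t\ge 0$ is genuinely small of order $\epsilon$. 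Both arguments are valid; yours trades the paper's energy-comparison step for an elementary calculus bound.
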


\begin{proof} Let $u_m^{c,\epsilon}$ be a minimizer obtained in Theorem \ref{bgn03} whose support is $B^m$. Then

\[E^c(B^m)\leq\Ec(u_m^{c,\epsilon}).\]
Moreover:
\[\Ec(u_m^{c,\epsilon})-\Ece(u_m^{c,\epsilon})=c\beta\int_{\partial B^m}\Big[u_m^{c,\epsilon}-(u_m^{c,\epsilon})^{1+\epsilon}\Big]d\Ls.\]

A sufficient condition for this term to go to 0 when $\epsilon$ goes to 0 is
\[\limsup_{\epsilon\rightarrow 0}\Vert u_m^{c,\epsilon}\Vert_{L^\infty(\partial B^m)}<\infty.\]
Since $u_m^{c,\epsilon}$ is radial,
\[C(n,\beta,c,m)= \Ece(1_{B^m})\geq \Ece(u_m^{c,\epsilon})=\Big[Q(u_m^{c,\epsilon})-\int_{B^m}\Theta(u_m^{c,\epsilon})d\Ls\Big]+N^{c,\epsilon}(u_m^{c,\epsilon}).\]
The first term is bounded below uniformly in $\epsilon$ due to inequality \eqref{bgn16} and
\[N^{c,\epsilon}(u_m^{c,\epsilon})=c\beta \Ls\left(\partial B^m\right) \Vert u_m^{c,\epsilon}\Vert_{L^\infty(\partial B^m)}^{1+\epsilon},\]
 so $ \Vert u_m^{c,\epsilon}\Vert_{L^\infty(\partial B^m)}$ is bounded uniformly as $\epsilon\rightarrow 0$.

Take $u$ in $\Um$.  We know from Theorem \ref{bgn03} that
\[\Ece(u)\geq \Ece(u_m^{c,\epsilon})\]
The previous discussion gives
\[\liminf_{\epsilon\rightarrow 0}\Ece(u)\geq E^c(B^m).\]
In order to conclude, we may assume $\Ec(u)<+\infty$ and it suffices to check that 
$$
\Ece(u)\goto\Ec(u),
$$
which amounts to show
\[\lim_{\epsilon\rightarrow 0}\int_{J_u}(\overline{u}^{1+\epsilon}+\underline{u}^{1+\epsilon})d\Ls= \int_{J_u}(\overline{u}+\underline{u})d\Ls.\]
Since $u^{1+\epsilon}\leq u+u^2$ for all $u$ and all $\epsilon\in ]0,1[$, the dominated convergence leads to the result.
\end{proof}

\section{Proof of Theorems \ref{mainresult} and \ref{mainresult.2}}\label{bgn09}

\medskip
\noindent{\bf Proof of Theorem \ref{mainresult}.}
 Recall that 
$$
\mathcal E(u)=\frac{1}{2}\int_{\Rn}|\nabla u|^2 d\Ln+\frac{\beta }{2}\int_{J_v}\left[\underline{u}^2+\overline{u}^2\right]d\Ls -\frac{1}{q}\int_{\Rn}u^q\,dx.
$$

We shall prove the following stronger result. 

\begin{theorem}\label{mainresult.1}
Let $u\in \Um$. Then
\[
\mathcal{E}(u)- E(B^m)\geq \frac{\beta}{2}\left(\underset{\{ u>0\}}{\text{essinf}}\ u\right)^2\left[\int_{J_u}\left(1_{\{\overline{u}>0\}}+1_{\{\underline{u}>0\}}\right)d\Ls-\Ls(\partial B^m)\right].
\]
\end{theorem}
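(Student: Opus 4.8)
The plan is to derive Theorem~\ref{mainresult.1} from Proposition~\ref{prop:minballc} and the inequality \eqref{bgn13}, by testing $\Ec$ against the competitor obtained by subtracting a constant from $u$. We may assume $\mathcal E(u)<+\infty$ (otherwise the inequality is trivial). Let $\delta:=\underset{\{u>0\}}{\text{essinf}}\;u$. If $\delta=0$ the right-hand side vanishes and the assertion $\mathcal E(u)\ge E(B^m)$ is exactly Proposition~\ref{prop:minballc} with $c=0$, so assume $\delta>0$. For each $c\in[0,\delta)$ I would take as competitor $v:=(u-c)^+$. The first thing to check is that $v\in\Um$: since $t\mapsto((\sqrt{t}-c)^+)^2$ is Lipschitz and vanishes at $0$, one has $v^2=((\sqrt{u^2}-c)^+)^2\in SBV(\Rn)$, hence $v\in SBV^{1/2}(\Rn)$; and because $u\ge\delta>c$ a.e.\ on $\{u>0\}$, the set $\{v>0\}=\{u>c\}$ coincides with $\{u>0\}$ up to a Lebesgue-null set, so $|\{v>0\}|=m$.

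The core of the argument is the identity
\begin{equation}\label{eq:plan-key}
\Ec(v)=\mathcal E(u)-\frac{\beta}{2}c^2\int_{J_u}\bigl(1_{\{\overline{u}>0\}}+1_{\{\underline{u}>0\}}\bigr)\,d\Ls+\frac{c^q}{q}m,
\end{equation}
valid for every $c\in[0,\delta)$; this is the $SBV^{1/2}$ counterpart of the relation $E^c(w;\Omega)=E(c+w;\Omega)-\tfrac{\beta}{2}c^2\Per(\Omega)+\tfrac{c^q}{q}|\Omega|$ that defines $E^c$. I would prove it term by term. The gradient parts agree since $\nabla v=\nabla u$ a.e.; on $\{v>0\}=\{u>0\}$ one has $c+v=u$, so $\Theta(v)=\tfrac1q(u^q-c^q)1_{\{u>0\}}$ and hence $\int_{\Rn}\Theta(v)\,d\Ln=\tfrac1q\int_{\Rn}u^q\,d\Ln-\tfrac{c^q}{q}m$; finally $J_v=J_u$ up to an $\Ls$-null set, and at $\Ls$-a.e.\ point of $J_u$ the approximate limits of $u$ lie in $\{0\}\cup[\delta,+\infty)$ (because $u\ge\delta$ a.e.\ on $\{u>0\}$ and $u=0$ a.e.\ elsewhere), so that $\overline{v}=\overline{u}-c$ when $\overline{u}>0$ and $\overline{v}=0$ when $\overline{u}=0$ (and similarly for $\underline{v}$), which yields $\overline{v}^2+2c\overline{v}=\overline{u}^2-c^2 1_{\{\overline{u}>0\}}$ and $\underline{v}^2+2c\underline{v}=\underline{u}^2-c^2 1_{\{\underline{u}>0\}}$. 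Adding the three contributions gives \eqref{eq:plan-key}.

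Granting \eqref{eq:plan-key}, the conclusion is immediate. Since $v\in\Um$, Proposition~\ref{prop:minballc} gives $\Ec(v)\ge E^c(B^m)$, while \eqref{bgn13} applied to $\Omega=B^m$ gives $E^c(B^m)\ge E(B^m)-\tfrac{\beta}{2}c^2\Per(B^m)+\tfrac{c^q}{q}m$. Substituting into \eqref{eq:plan-key} and cancelling $\tfrac{c^q}{q}m$ (and using $\Per(B^m)=\Ls(\partial B^m)$) gives
\[
\mathcal E(u)-E(B^m)\;\ge\;\frac{\beta}{2}c^2\left(\int_{J_u}\bigl(1_{\{\overline{u}>0\}}+1_{\{\underline{u}>0\}}\bigr)\,d\Ls-\Ls(\partial B^m)\right)
\]
for all $c\in[0,\delta)$. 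By \eqref{bgn16}, $\mathcal E(u)<+\infty$ forces $\int_{J_u}(\overline{u}^2+\underline{u}^2)\,d\Ls<\infty$, hence $\int_{J_u}(1_{\{\overline{u}>0\}}+1_{\{\underline{u}>0\}})\,d\Ls\le\delta^{-2}\int_{J_u}(\overline{u}^2+\underline{u}^2)\,d\Ls<\infty$; both sides above are then continuous in $c$, and letting $c\uparrow\delta$ yields exactly the inequality of Theorem~\ref{mainresult.1}. For Lipschitz $\Omega$ this recovers \eqref{bgn08}, since there $J_u=\partial\Omega$ and $1_{\{\overline{u}>0\}}+1_{\{\underline{u}>0\}}=1$ $\Ls$-a.e.\ on $\partial\Omega$, so the bracket equals $\Per(\Omega)-\Per(B)$.

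The step I expect to be the real work is establishing \eqref{eq:plan-key}: the $SBV^{1/2}$ bookkeeping that $v\in SBV^{1/2}(\Rn)$, that $\{v>0\}$ and $J_v$ agree with $\{u>0\}$ and $J_u$ up to negligible sets, and that the traces of $v$ are those of $u$ shifted by $-c$ precisely where the latter are positive. All of these rest on the strict positivity of $\delta=\underset{\{u>0\}}{\text{essinf}}\;u$, which here plays the role that the threshold $\inf_\Omega u_\Omega$ plays in the Lipschitz identity behind \eqref{bgn13}.
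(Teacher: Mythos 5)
Your proof is correct and follows essentially the same route as the paper: the paper's argument is precisely to test $\Ec$ against $(u-c)_+$ for $c$ below the essential infimum, invoke Proposition~\ref{prop:minballc} and the lower bound $E^c(B^m)\ge E(B^m)-\tfrac{\beta}{2}c^2\Per(B^m)+\tfrac{c^q}{q}m$, and read off the result from the identity for $\Ec((u-c)_+)$ that you establish as \eqref{eq:plan-key}. Your version simply spells out the $SBV^{1/2}$ bookkeeping behind that identity and the passage $c\uparrow\delta$, which the paper leaves implicit.
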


\begin{remark}
\label{}
{\rm
Note that $\int_{J_u}\left(1_{\{\overline{u}>0\}}+1_{\{\underline{u}>0\}}\right)d\Ls$ is larger than $\Ls(J_u)$, since it counts twice the points in $J_u$ on which both $\overline{u}$ and $\underline{u}$ are positive.
}
\end{remark}

\begin{proof}
 We may assume $\underset{\{ u>0\}}{\text{essinf}}\,u>0$. Let $c\in ]0,\underset{\{ u>0\}}{\text{essinf}}\ u[$. Then $(u-c)_+$ belongs to $\Um$ and Proposition \ref{prop:minballc} gives
\[\Ec ((u-c)_+)\geq E^c(B^m).\]
Moreover
\[\Ec((u-c)_+)=\mathcal{E}(u)-\frac{\beta}{2}c^2\int_{J_u}\left(1_{\{\overline{u}>0\}}+1_{\{\underline{u}>0\}}\right)d\Ls+\frac{c^q}{q}m\]
and
\[E^c(B^m)\geq E(B^m)-\frac{\beta}{2}c^2\Ls(B^m)+\frac{c^q}{q}m,\]
 so that the result follows. This proves as well Theorem \ref{mainresult} taking $u$ to be the minimizer of $E(\cdot;\Omega)$, extended by $0$ on $\RR^n \sm\Omega$.
\end{proof}

\medskip
\noindent{\bf Proof of Theorem \ref{mainresult.2}.}
It is enough to show:
\begin{equation}\label{EqQuantitativeE}
E(\Omega)-E(B)\geq C\mathcal{A}(\Omega)^2,
\end{equation}
where $C>0$ depends on $n,\beta,q$ and $|\Omega|$, only. Indeed we know that $E(\Omega)=\frac{q-2}{2q} \lb_q(\Omega)^\frac{q}{q-2}$, so
\[C\mathcal{A}(\Omega)^2\leq \frac{q-2}{2q} \lb_q(\Omega)^\frac{q}{q-2}-\frac{q-2}{2q} \lb_q(B)^\frac{q}{q-2}.\]
 Since $\mathcal{A}(\Omega)\le 2$, up to replacing $C$ with   a smaller constant, we loose no generality by supposing that $\lb_q(\Omega)$ is close to $\lb_q(B)$. For $\lb_q(\Omega)$ close enough to $\lb_q(B)$, we have:
\[\frac{q-2}{2q} \lb_q(\Omega)^\frac{q}{q-2}-\frac{q-2}{2q} \lb_q(B)^\frac{q}{q-2}\leq \lb_q(B)^\frac{2}{q-2}\left(\lb_q(\Omega)-\lb_q(B)\right),\]
which proves the result. Let us now show \eqref{EqQuantitativeE}.\bigbreak

From now on, we let $m:=|\Omega|$. The idea is to use an intermediate set $A$ such that $\mathcal{A}(A)$ and $\mathcal{A}(\Omega)$ are of the same order, but $\inf\ u_A\geq C(n,\beta,|\Omega|)$ for a certain positive constant. This may be done by considering the following auxiliary minimisation problem
\begin{equation}\label{OptProb}
\inf\{ E(A)+k|A|,\ A\subset\Omega\},
\end{equation}
for a suitable constant $k>0$ (that may be chosen such that if we restrict the admissible sets to balls of volume less than $m$, the optimal ball has the measure equal to $m$). We need to check such a minimizer exists. Showing the existence and regularity of such a minimizer is similar to the work that has been done in Section \ref{bgn04}, we outline the main steps.

\vskip10pt\noindent {\bf Step 1.} We introduce the following relaxed version in $SBV^\frac12$
\[\inf\{ \mathcal{E}_{k}(u),\ |\{ u>0\}\setminus\Omega|=0 \},\]
where $\mathcal{E}_{k}(u)=\mathcal{E}(u)+k|\{ u>0\}|$.

\vskip10pt\noindent {\bf Step 2.} We show the existence of a minimizer of the relaxed problem. 
Take  a minimizing sequence $(u_i)$, up to  subsequences  we can suppose it converges in $L^2_\text{loc}(\mathbb{R}^n)$ to a function $u$ with support in $\Omega$. Since  $Q$ is lower semi-continuous, we only need to show that $(u_i)$ is tight in the sense that
\[\underset{R\rightarrow\infty}{\limsup}\ \underset{i\geq 1}{\sup}\left(\Vert u_i\Vert_{L^2(\Omega\setminus B_R)}\right)=0.\]
With the $L^2_{\text{loc}}$ convergence, this will prove that $\Vert u_i\Vert_{L^q(\mathbb{R}^n)}\underset{i\rightarrow \infty}{\longrightarrow}\Vert u\Vert_{L^q(\mathbb{R}^n)}$. Let $\chi_R$ be a smooth function such that
\[\begin{cases}
0\leq \chi_R\leq 1\\
\chi_R\equiv 1\text{ in }\mathbb{R}^n\setminus B_{R}\\
\chi_R\equiv 0\text{ in }B_{R-1}\\
\Vert\nabla \chi_R\Vert_{L^\infty(\mathbb{R}^n)}\leq 2,
\end{cases}\]
and let $v_{i,R}=\chi_R u_i$. Then:
\begin{align*}
\Vert u_i\Vert_{L^2(\mathbb{R}^n\setminus B_R)}^2&\leq \Vert v_i\Vert_{L^2(\mathbb{R}^n\setminus B_R)}^2\\
&\leq \lambda_2\left(B^{|\{ v_i>0\}|}\right)^{-1}Q(v_i)& \text{using inequality  \eqref{bgn16} for $q=2$}\\
&\leq \lambda_2\left(B^{|\Omega\setminus B_{R-1}|}\right)^{-1}(2Q(u_i)+8\Vert u_i\Vert_{L^2(\mathbb{R}^n)}^2)&\text{since }|\nabla v_i|^2\leq 2|\nabla u_i|^2+8u_i^2\\
&\leq C\lambda_2\left(B^{|\Omega\setminus B_{R-1}|}\right)^{-1}Q(u_i)&\text{where }C\text{ depends on }n,m.
\end{align*}
$Q(u_i)$ is bounded uniformly in $i$ and $\lambda_2\left(B^{|\Omega\setminus B_{R-1}|}\right)^{-1}$ goes to $0$ when $R\rightarrow\infty$: this proves what we wanted to show.
\par
Thus we know that
\[\mathcal{E}_{k}(u)\leq \underset{i\rightarrow\infty}{\liminf}\ \mathcal{E}_{k}(u_i)\]
which means $u$ is a minimizer.

\medskip
\noindent {\bf Step 3.} We show that $u\geq c 1_{\{u>0\}}$ for a certain $c=c(n,\beta,k,E(\Omega))>0$; this is essentially the same proof as Lemma \ref{lemmaCaff} in the case $\epsilon=1$. We then show that $u$ is bounded and defines an open domain $A$ the same way we did  in Lemma \ref{bounded} and Lemma \ref{lem:support}. 

\vskip10pt\noindent{\bf  Step 4. }
 The map  $\rho\mapsto E(B_\rho)$ is a strictly decreasing function since for any ball $B'\Subset B$, one has:
\[E(B)\leq\frac{|B|}{|B'|}E(B'),\]
which implies 
\[\frac{d}{d\rho}E(B_\rho)\leq \frac{nE(B_\rho)}{\rho}(<0).\]
 We deduce that for any $k>0$ small enough depending only on the parameters $(n, q,\beta,m)$, the function
\begin{equation}\label{Decreasing}
\begin{cases} [0,r_m]\rightarrow \mathbb{R}\\
\rho\mapsto E(B_\rho)+2k|B_\rho|\end{cases}
\end{equation}
admits a minimum in $r_m$ (where $r_m$ is such that $|B_{r_m}|=m$). 

\vskip10pt\noindent{\bf  Step 5 }
As remarked at the beginning of the proof, we can suppose without loss of generality that $E(\Omega)\in [E(B),\frac{1}{2}E(B)]$. Letting $k$ be given by Step 4, the solution $u$ of the relaxed problem \eqref{OptProb} is thus such that  $u\geq c$ on its support $A$, where $c$
depends only on the parameters $(n, q, \beta,m)$.
\par
We can then write (with $c$ being a constant only depending on $(n, q , \beta,m)$ that may not be the same from line to line):
\begin{align*}
E(\Omega)&\geq E(A)-k|\Omega\setminus A|&\text{ because }A\text{ solves \eqref{OptProb},}\\
&\geq E(B^{|A|})-k|\Omega\setminus A|+c\Big[\Ls(\partial A)-\Ls(\partial B^{|A|})\Big]&\text{  in view of Theorem }\ref{mainresult},\\
&\geq E(B^{|A|})-k|\Omega\setminus A|+c|A\triangle B^{|A|}|^2&\text{ using the inequality \eqref{bgn01},}\\
&\geq E(B^{|\Omega|})+k|\Omega\setminus A|+c|A\triangle B^{|A|}|^2&\text{ using that \eqref{Decreasing} has a minimum in $r_m$,}\\
&\geq E(B^{|\Omega|})+c\Big[2|\Omega\setminus A|+|A\triangle B^{|A|}|\Big]^2&\text{ because }|\Omega\setminus A|\geq |\Omega|^{-1}|\Omega\setminus A|^2,\\
&\geq E(B^{|\Omega|})+c\mathcal{A}(\Omega)^2&\text{ we detail that point below,}
\end{align*}
so that Theorem \ref{mainresult.2} is proved.
\par
We detail the last inequality. Let $B^{|\Omega|}$ be any ball containing $B^{|A|}$. Then:
\begin{align*}
2|\Omega\setminus A|+|A\triangle B^{|A|}|&=\left(|\Omega\setminus A|+|A\setminus B^{|A|}|\right)+\left(|\Omega\setminus A|+|B^{|A|}\setminus A|\right).
\end{align*}
On one hand
\begin{align*}
|\Omega\setminus A|+|A\setminus B^{|A|}|&\geq |\Omega\setminus B^{|A|}|\geq |\Omega\setminus B^{|\Omega|}|.\\
\end{align*}
On the other hand
$$
|\Omega\setminus A|+|B^{|A|}\setminus A|= |\Omega|-|A|+|B^{|A|}|-|B^{|A|}\cap A|$$
$$
= |B^{|\Omega|}|-|B^{|A|}\cap A|= |B^{|\Omega|}\setminus (A\cap B^{|A|})| \geq |B^{|\Omega|}\setminus \Omega|.
$$
{\bf Acknowledgments.} The first and the third authors were supported by the LabEx PERSYVAL-Lab GeoSpec (ANR-11-LABX-0025-01) and ANR SHAPO (ANR-18-CE40-0013). 
\bibliographystyle{plain}
\bibliography{RefSaintVenant}

\end{document}